\documentclass[12pt]{amsart}
\usepackage[english]{babel}
\usepackage[margin=1in]{geometry}
\usepackage{amsmath, amssymb}
\usepackage{graphicx}
\usepackage[colorlinks=true, allcolors=blue]{hyperref}
\usepackage{dutchcal} 
\usepackage{comment}
\usepackage{thm-restate}

\theoremstyle{definition} 
\newtheorem{thm}{Theorem}[section]
\newtheorem{example}{Example}[section]
\newtheorem{prop}[thm]{Proposition} 
\newtheorem{lem}[thm]{Lemma} 
\newtheorem{remark}[thm]{Remark} 
\newtheorem{defn}[thm]{Definition} 
\newtheorem{Cor}[thm]{Corollary}
\newtheorem{question}[thm]{Question}

\newcommand{\cu}{\mathcal {U}}

\newcommand{\zz}{{\mathbb Z}}

\newcommand{\rr}{{\mathbb R}}

\DeclareMathOperator{\spn}{span}

\DeclareMathOperator{\Cone}{Cone}

\newcommand{\gs}{\sigma} 
 
\newcommand{\gt}{\tau}

\newcommand{\gG}{\Gamma}

\DeclareMathOperator{\Lk}{Lk} 
\DeclareMathOperator{\gdim}{gdim} 
\DeclareMathOperator{\St}{St}

\makeatletter
\def\smallunderbrace#1{\mathop{\vtop{\m@th\ialign{##\crcr
   $\hfil\displaystyle{#1}\hfil$\crcr
   \noalign{\kern3\p@\nointerlineskip}%
   \tiny\upbracefill\crcr\noalign{\kern3\p@}}}}\limits}
\makeatother

\title{Embedding complexes into pseudomanifolds}
\author{Kasia Jankiewicz}
\author{Kevin Schreve}

\address{Department of Mathematics, University of British Columbia, Vancouver, BC, Canada V6T 1Z2}
\email{kasia@math.ubc.ca}

\address{Department of Mathematics, Louisiana State University, Baton Rouge, LA~70806}
\email{kschreve@lsu.edu}

\subjclass{57Q35}

\begin{document}
\begin{abstract}
   We show that for $d\geq 2$ every finite $d$-dimensional simplicial complex is a deformation retract of a $(2d-1)$-dimensional pseudomanifold with boundary. Moreover, it embeds as a retract in a closed $(2d-1)$-dimensional pseudomanifold.
\end{abstract}

\maketitle

\section{Introduction}
By general position, every finite $d$-dimensional simplicial complex embeds into $\rr^{2d+1}$. 
This dimension is known to be sharp, for instance van Kampen showed that the $d$-fold join of $3$ points does not embed into $\rr^{2d}$ \cite{vanKampen33}. 
On the other hand, Stallings showed that every finite $d$-dimensional CW-complex is homotopy equivalent to a complex which embeds into $\rr^{2d}$  \cite{Stallings65, DranishnikovRepovs93}. 
By taking a regular neighborhood of the image, this shows that every $d$-dimensional CW-complex is homotopy equivalent to a $2d$-dimensional manifold with boundary. 

In this paper, we are interested in thickenings of simplicial complexes into pseudomanifolds, possibly  with boundary. 
There are many different definitions of pseudomanifolds in the literature. 
The pseudomanifolds we construct are simplicial complexes with the links of vertices PL-homeomorphic to $(d-1)$-dimensional manifolds (possibly with boundary). We call such a complex a \emph{pseudomanifold with isolated singularities}. A  pseudomanifold with isolated singularities is \emph{closed} if all links of vertices are PL-homeomorphic to closed manifolds. 
Our analogue of Stallings theorem in this setting is:

\begin{restatable}{thm}{main}
\label{t:main}
For $d \ge 2$, every finite $d$-dimensional simplicial complex $X$ is a deformation retract of an orientable $(2d-1)$-dimensional pseudomanifold with boundary $P(X)$ with isolated singularities. 
\end{restatable}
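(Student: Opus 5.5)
The plan is to split $X$ into a lower-dimensional ``manifold part'' that can be thickened in a Euclidean space, plus cones over vertex links. The cone points will be the only singular points of $P(X)$.

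\emph{Decomposition.} Pass to the barycentric subdivision $X'$. Let $V$ be the vertex set of $X$, and for $v\in V$ let $\St(v)$ be the closed star of $v$ in $X'$. These stars are pairwise disjoint, and each is the cone $\Cone(L_v)$ on $L_v=\Lk(v,X')\cong \Lk(v,X)$, a complex of dimension $d-1$. Set
\[
Y \;=\; X\setminus \bigcup_{v\in V}\mathrm{int}\,\St(v).
\]
This $Y$ is the union of the dual blocks of simplices of dimension $\ge 1$, so $\dim Y = d-1$. Moreover $X = Y\cup\bigcup_v \Cone(L_v)$, glued along the disjoint subcomplexes $L_v\subset Y$.

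\emph{Thickening $Y$.} Since $2(d-1)+1 = 2d-1$, general position gives a PL embedding $Y\hookrightarrow \rr^{2d-1}$. Let $N$ be a regular neighborhood of $Y$. Then $N$ is an orientable $(2d-1)$-manifold with boundary that collapses onto $Y$. The remaining task is to realize each $L_v$ inside $\partial N$ and thicken it there: find disjoint regular neighborhoods $M_v\subset\partial N$, which are $(2d-2)$-manifolds with boundary, such that $M_v$ retracts onto a copy of $L_v$. The copy must be joined to $L_v\subset Y$ by a collar $L_v\times[0,1]\subset N$, and $N$ should deformation retract onto $Y\cup\bigcup_v L_v\times[0,1]$. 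Then we set
\[
P(X) \;=\; N\cup\bigcup_{v}\Cone(M_v),
\]
with $\Cone(M_v)$ glued along $M_v\subset\partial N$.

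\emph{Verification.} The link of each cone point is $M_v$, a PL $(2d-2)$-manifold with boundary. Every other point lies in a manifold part. Near $M_v$ this is because gluing $\Cone(M_v)$ onto $\partial N$ along the codimension-zero submanifold $M_v$ is locally like gluing two half-spaces. So $P(X)$ is a $(2d-1)$-pseudomanifold with boundary and isolated singularities. It is orientable because $N\subset\rr^{2d-1}$ is orientable and coning preserves orientability away from the cone point. For the deformation retraction, first retract each $\Cone(M_v)$ onto $\Cone(L_v\times\{1\})$ using the collapse $M_v\searrow L_v$. Then retract $N$ onto $Y$ together with the collars, and finally absorb the collars. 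The result is $Y\cup\bigcup_v\Cone(L_v)=X$.

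\emph{Main obstacle.} The hard step is the second one: pushing $L_v$ to $\partial N$ along an embedded collar, compatibly with the collapse of $N$. The naive approach embeds $Y\cup\bigcup_v L_v\times[0,1]$ in $\rr^{2d-1}$, but this complex has dimension $d$, and general position only gives double points of dimension $\le 1$. I would try two things in order.

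First, I would avoid embedding a new collar and use the structure of the regular neighborhood itself. Take $N$ to be a regular neighborhood of $Y$ built from the derived neighborhood of $L_v$ in $Y$. Locally near $L_v$, $N$ is a mapping cylinder of $\partial N\to Y$. Take $M_v$ to be a regular neighborhood in $\partial N$ of the preimage of $L_v$, perturbed to general position; this preimage is a bundle-like object over $L_v$ with fibre of dimension $d-1$.

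Second, if that preimage does not retract to $L_v$, I would use the spare codimension. $L_v$ has dimension $d-1$ and $\partial N$ has dimension $2d-2$, so $L_v$ itself embeds by general position only when $2(d-1)+1\le 2d-2$, which fails. I would therefore use the fact that $L_v$ already sits inside $Y\subset N$: attach a thin external collar $L_v\times[0,1]$ outside $N$, then take a regular neighborhood of $N\cup\text{collar}$. This replaces the embedding problem by gluing abstract handles along $L_v$. Orientability would need to be checked in this glued setting, and the condition $d\ge 2$ would be used to keep the collars disjoint and the cones' links nonempty and connected to the manifold part.
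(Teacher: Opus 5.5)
Your architecture matches the paper's. Your $Y$ is the spine $K(X)$, your $N$ is its regular neighborhood in $\rr^{2d-1}$, and $P(X)$ comes from thickening copies of the vertex links in $\partial N$ and coning them off. The gap is the step you flag as the main obstacle, which is the heart of the proof. Neither of your fixes closes it.

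First, a correction. The closed stars in $X'$ are not disjoint: $\St(v)\cap\St(w)=L_v\cap L_w\ni v_e$ whenever $e=\{v,w\}$ is an edge, and in fact $Y=\bigcup_v L_v$. Disjoint copies of the links exist only after pushing off along the collars, as the frontier of a regular neighborhood of $Y$ in $X$. An arbitrary general-position embedding of $Y$ gives no control over whether this $d$-dimensional neighborhood embeds compatibly.

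Your first fix fails. The preimage of $L_v$ under the mapping-cylinder projection $\partial N\to Y$ is codimension zero in $\partial N$, roughly an $S^{d-1}$-bundle over $L_v$. It does not retract to $L_v$, and the preimages for different $v$ overlap because the $L_v$ do. Choosing the ``collar direction'' in each fibre is the same extension problem in disguise. Your second fix is circular. Attaching a thickened collar to $N$ needs a thickening of $L_v$ inside $\partial N$, i.e.\ an embedding $L_v\hookrightarrow\partial N$, which you note general position does not give. Gluing ``abstract handles'' along the non-manifold $L_v$ also does not yield a manifold.

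The missing idea is to induce the embedding of $Y$ from a general position map $f\colon X\to\rr^{2d-1}$ of the whole complex, and choose the spine adapted to $f$.
\begin{itemize}
\item By the vertex count, $f$ is injective on the $(d-1)$-skeleton and on any two simplices sharing at least an edge. So double points occur only between a $d$-simplex $\gs_i$ and a simplex meeting it in at most a vertex, and they form segments $\ell_{ij}\subset\gs_i$.
\item Since $K(\gs_i)=\Cone_{v_{\gs_i}}(K(\partial\gs_i))$ has codimension one in $\gs_i$, you can choose the barycenters $v_{\gs_i}$ inductively. Bad choices lie in finitely many hyperplanes, so you can arrange that $K(\gs_i)$ meets each $\ell_{ij}$ transversely in finitely many points. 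You can also make it avoid the finitely many points of $\ell_{ij}$ paired under $f$ with $K(\gs_j)\cap\ell_{ji}$.
\item Then $f|_{K(X)}$ is injective. By compactness and uniform continuity, $f$ stays injective on a thin regular $\epsilon$-neighborhood $N_\epsilon$ of $K(X)$ in $X$, which is exactly your $Y\cup\bigcup_v L_v\times[0,1]$.
\item Take $N$ to be a small regular neighborhood of $f(K(X))$ with $N\cap f(N_\epsilon)$ inside the interior of $f(N_\epsilon)$. The frontier of the resulting regular neighborhood of $K$ in $N_\epsilon$ lies in $\partial N$ and is the disjoint union of copies of $\Lk_X(v)$.
\end{itemize}
This places the links in $\partial N$ with collars for free, bypassing the failed general-position count in $\partial N$. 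With this step supplied, your thickening, coning, link check, orientation and retraction steps go through essentially as in the paper.
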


More concretely, $P(X)$ is obtained from a certain $(2d-1)$-dimensional manifold with boundary by coning off disjoint codimension-zero submanifolds of the boundary. Of course, the theorem does not hold if $d = 1$.

Via simplicial approximation, every finite $d$-dimensional CW-complex is homotopy equivalent to a finite $d$-dimensional simplicial complex \cite[Thm 2C.5]{Hatcher02}, so we obtain the following.

\begin{Cor}
    Every finite $d$-dimensional CW-complex $X$ is homotopy equivalent to an orientable $(2d-1)$-dimensional pseudomanifold with boundary and with isolated singularities.
\end{Cor}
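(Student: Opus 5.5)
The statement to prove is the Corollary, and it follows directly from Theorem~\ref{t:main}; the plan is a two-step reduction.

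\textbf{Step 1 (reduce to simplicial complexes).} Let $X$ be a finite $d$-dimensional CW-complex. By simplicial approximation \cite[Thm 2C.5]{Hatcher02}, $X$ is homotopy equivalent to a finite simplicial complex $Y$. We need $\dim Y = d$. The construction there replaces cells inductively, attaching each cell along a simplicial approximation of its attaching map. Cells of dimension $k$ are realized by $k$-dimensional simplicial pieces, for example mapping cylinders of simplicial maps from subdivided spheres $S^{k-1}$. Hence $\dim Y = d$, and $Y$ is not of larger dimension. If $\dim Y$ happened to come out smaller than $d$, we would wedge on a $d$-disk; this does not change the homotopy type and keeps the complex finite.

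\textbf{Step 2 (apply the theorem).} When $d \ge 2$, apply Theorem~\ref{t:main} to $Y$. This gives an orientable $(2d-1)$-dimensional pseudomanifold with boundary $P(Y)$, with isolated singularities, of which $Y$ is a deformation retract. In particular the inclusion $Y \hookrightarrow P(Y)$ is a homotopy equivalence. Composing,
\[
X \simeq Y \simeq P(Y),
\]
which is the claim.

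\textbf{Main obstacle.} The only subtle point is the dimension count in Step~1: we must check that the cited simplicial replacement preserves dimension $d$, which rests on the inductive cell-by-cell construction. We must also note the implicit hypothesis $d \ge 2$ inherited from Theorem~\ref{t:main}. The Corollary is false for $d=1$: for instance, a wedge of two circles is not homotopy equivalent to a $1$-dimensional pseudomanifold with boundary, since a compact $1$-manifold with boundary is a disjoint union of arcs and circles. The Corollary should therefore be read with $d \ge 2$.
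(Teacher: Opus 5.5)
Your proposal is correct and follows the paper's argument: Hatcher's Theorem 2C.5 replaces $X$ by a homotopy equivalent finite simplicial complex of the same dimension $d$, and Theorem~\ref{t:main} then applies, since the inclusion of a deformation retract is a homotopy equivalence. Your caveat that $d\ge 2$ must be assumed is right, because the paper states it only in Theorem~\ref{t:main}; for the same kind of reason $X$ should also be taken to be connected, since the paper's definition of a pseudomanifold requires connectedness.
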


We are also interested in embedding simplicial complexes into closed pseudomanifolds. 
Combining Theorem \ref{t:main} with a general version of Davis's reflection group trick implies

\begin{restatable}{thm}{mainclosed}
\label{t:main2}
Every finite $d$-dimensional simplicial complex $X$ embeds as a retract into a closed $(2d-1)$-dimensional pseudomanifold $Q(X)$ with isolated singularities. 
If $X$ is aspherical, then $Q(X)$ can be chosen to be aspherical. 
\end{restatable}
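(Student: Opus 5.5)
We deduce Theorem~\ref{t:main2} from Theorem~\ref{t:main} via Davis's reflection group trick, applied to the pseudomanifold with boundary $P(X)$. For $d\ge 2$, Theorem~\ref{t:main} gives a $(2d-1)$-dimensional pseudomanifold $P=P(X)$ with isolated singularities that deformation retracts onto $X$. (The case $d=1$ is not covered by Theorem~\ref{t:main}. There we should handle it directly, or note that a graph embeds as a retract in a closed $1$-dimensional pseudomanifold only in degenerate senses; we will check what the conventions allow.)

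\textbf{Step 1: a mirror structure on $\partial P$.} Take a triangulation of $P$ and let $L$ be a flag triangulation of the boundary, for instance the barycentric subdivision of the boundary complex $\partial P$. Here $\partial P$ is the union of the codimension-one faces lying in only one top simplex. Index mirrors by the vertices $v$ of $L$, with mirror $P_v$ the closed star of $v$ in the dual sense: the union of the simplices of the second barycentric subdivision of $\partial P$ meeting $v$. Equivalently, use the standard decomposition of the boundary of a complex into ``dual cells'' indexed by $L$. Let $W$ be the right-angled Coxeter group whose nerve is $L$, and form
\[
\mathcal U(W,P)=W\times P/\sim,
\]
where $(w,x)\sim(w',x)$ if $w^{-1}w'$ lies in the special subgroup generated by the $v$ with $x\in P_v$.

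\textbf{Step 2: local structure.} We must check that $\mathcal U(W,P)$ is a closed pseudomanifold with isolated singularities. A point of $\mathcal U$ that is interior to $P$ has its original link. A boundary point $x$ lies in the mirrors indexed by a simplex $\sigma$ of $L$. Its neighborhood is the $W_\sigma\cong(\zz/2)^{k}$ reflection of a neighborhood in $P$ across $k$ mirrors that meet like coordinate hyperplanes. Locally this is how one doubles a corner, and it turns the manifold-with-boundary link into a closed manifold link. Since $\mathcal U$ is built from copies of $P$, orientability is not needed.

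The singular vertices are the cone points of $P$. By the remark after Theorem~\ref{t:main}, $P$ is a manifold with boundary $M$ with disjoint codimension-zero pieces of $\partial M$ coned off. So a cone point $c$ has link a compact manifold $N$ with boundary, and in $\mathcal U$ its link is $\mathcal U(W_{\Lk},N)$ with the induced mirror structure on $\partial N$. This is the main obstacle: we must show that this reflected link is a closed manifold. The plan is to choose the subdivision so that the mirror structure near each cone point is the cone on a mirror structure on $\partial N$ whose nerve is a flag PL sphere-compatible structure. Then the local reflection argument above, applied one dimension down, shows that $\mathcal U(W_{\Lk},N)$ is a closed $(2d-2)$-manifold. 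Points on the boundary of the cone over $N$ are handled by the corner-doubling argument. Because $\mathcal U$ is locally finite but infinite, we then pass to a finite-index torsion-free normal subgroup $\Gamma\le W$ (which exists since $W$ is residually finite and virtually torsion-free). We set $Q=\mathcal U(W,P)/\Gamma$, a finite closed pseudomanifold with isolated singularities of dimension $2d-1$.

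\textbf{Step 3: retraction and asphericity.} The projection $W\times P\to P$ descends to a retraction $\mathcal U\to P$ and hence to $Q\to P$ (the ``folding'' map). Composing it with the deformation retraction $P\to X$ exhibits $X\subset P\subset Q$ as a retract. If $X$ is aspherical, then $P\simeq X$ is aspherical. Davis's theorem states that $\mathcal U(W,P)$ is aspherical when $P$ and all intersections of mirrors are aspherical and $L$ is flag. So we arrange that each mirror intersection $P_\sigma$ is contractible, which holds for the dual-cell decomposition in Step 1. Then $\mathcal U$ is aspherical, and therefore so is its finite quotient $Q$.
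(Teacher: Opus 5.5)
Your overall route is the paper's. You thicken by Theorem~\ref{t:main}, take the dual-cell mirrors $\St_{L'}(s)$, $s\in L^{(0)}$, of a flag triangulation $L$ of $\partial P$, pass to $\cu(W_L,P')/\Gamma$, and use the folding map and the universal cover for the retraction and asphericity.

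\textbf{The gap.} The one step specific to pseudomanifolds is the link of a cone point $c$. You flag it as the main obstacle but address it only with a plan, and the plan is misdirected.
\begin{itemize}
\item With dual-cell mirrors, a point in the interior of a simplex $\sigma_0\subset\cdots\subset\sigma_m$ of $L'$ lies in $\St_{L'}(s)$ exactly when $s\in\sigma_0$.
\item So the mirrors through any point are indexed by a simplex of $L$, and the local group is $(\zz/2)^k$.
\item Within your Step 1 structure there is therefore no way, and no need, to arrange a ``flag PL sphere'' nerve at $c$ by subdividing.
\end{itemize}

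\textbf{The missing observation.} The cone point $c$ is itself a vertex of $L$, so $S(c)=\{c\}$.
\begin{itemize}
\item The only mirror through $c$ is $\St_{L'}(c)=\Cone(c,\Lk_{L'}(c))$.
\item Here $\Lk_{L'}(c)\cong\Lk_{\partial P}(c)=\partial N$.
\item This is nonempty, because $N$ is a regular neighborhood of a complex of dimension $d-1<2d-2$.
\item Hence $\zz/2$ reflects $\Lk_P(c)=N$ across all of $\partial N$, and the link of $c$ in $\cu$ is the double $N\cup_{\partial N}N$, a closed combinatorial $(2d-2)$-manifold.
\end{itemize}
This is exactly how the paper treats cone vertices.

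All other boundary vertices, including those of $\Cone(c,\partial N)$ other than $c$, are manifold-with-boundary points, because the singularities of $P$ are isolated. There your corner-doubling heuristic is made precise by the paper's computation $\Lk_{P'}(v_\sigma)=(\partial\sigma)'*\Lk_P(\sigma)'\cong\sigma'*\partial\Delta$, followed by reflecting $\sigma'$ to a $(\dim\sigma)$-sphere.

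\textbf{Smaller points.}
\begin{itemize}
\item The version of Davis's theorem you quote is false as stated. Take $P=D^2$ with the single mirror $\partial D^2$: $P$ and its mirror are aspherical and the one-point nerve is flag, yet $\cu=S^2$.
\item What you actually have, and what the paper uses (Davis, p.~168), is that the mirror intersections are contractible dual cells. Then the universal cover of $\cu(W_L,P')$ is $\cu(W_{\widetilde L},\widetilde P')$, which is contractible when $\widetilde P$ is.
\item The case $d=1$ should not be ``handled directly''. A closed $1$-dimensional pseudomanifold is a circle, so the statement fails for any graph with a vertex of valence $\ge 3$. The theorem is tacitly for $d\ge 2$, as its deduction from Theorem~\ref{t:main} shows.
\end{itemize}
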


The idea of the proof of Theorem \ref{t:main} is simpler than Stallings in the sense that we do not have to homotope the simplicial complex $L$. 
Instead, we consider the punctured simplicial complex $L - L^{(0)}$, where $L^{(0)}$ is the vertex set of $L$, which deformation retracts onto an $(n-1)$-dimensional subcomplex $K$ spanned by barycenters of simplices of dimension $> 0$.
Since $K$ is $(n-1)$-dimensional, by general position it embeds into $\rr^{2n-1}$. 
We construct this embedding in a precise way so that it is induced from a general position map from $L \rightarrow \rr^{2n-1}$. 
This implies that a suitable regular neighborhood $M$ of $K$ in $\rr^{2n-1}$ intersects the image of $L$ precisely in a regular neighborhood of $K$ in $L$. 
The boundary of this regular neighborhood of $K$ in $L$ is a disjoint union of links of original vertices of $L$, which by construction are embedded subcomplexes of $\partial M$. 
Thickening these subcomplexes to manifolds inside of $\partial M$ and coning each of them off gives a pseudomanifold with boundary which is homotopy equivalent to $L$.

Here is an example which motivated Theorem \ref{t:main} and Theorem \ref{t:main2}. 

\begin{example}
Let $W_L$ be a right-angled Coxeter group with nerve $L$ a flag simplicial graph. 
The commutator subgroup is finite index in $W_L$ and is the fundamental group of a locally CAT(0) cube complex $P_L$ with all links isomorphic to $L$. 
If $L$ is planar, then we can assume $L$ is a full subcomplex of a flag triangulation of $S^2$, and the Davis complex for the associated right-angled Coxeter group $W_{S^2}$ is a contractible $3$-manifold with a proper, cocompact $W_{S^2}$-action. 
Then $P_L \subset P_{S^2}$ thickens to an aspherical $3$-manifold with boundary. 

If $L$ is not planar, then it is usually the case that $W_L$ is not virtually the fundamental group of an aspherical $3$-manifold \cite{dhw23}.
However, every such $L$ embeds as a full subcomplex into some closed higher genus surface $\Sigma_g$. 
The Davis complex corresponding to a regular neighborhood of $L$ is a contractible $3$-pseudomanifold with isolated singularities and nonempty boundary, and the Davis complex associated to $\Sigma_g$ is a contractible $3$-pseudomanifold with isolated singularities and proper $W_L$-action. 

In higher dimensions one can do a similar construction using the fact that $k$-dimensional simplicial complexes immerse into $\rr^{2k}$, and pulling back a regular neighborhood gives an embedding into a $(2k)$-dimensional manifold. 
\end{example}

\begin{remark}
Bestvina-Kapovich-Kleiner defined the \emph{action dimension} of a finitely generated group $G$ as the minimal dimension of contractible manifold with a proper $G$-action \cite{bkk}.
We propose the \emph{pseudomanifold action dimension} of a finitely generated group as the minimal dimension of contractible pseudomanifold with a proper $G$-action. 
It is a corollary of Stallings' theorem that if $G$ admits a finite $K(G,1)$, then the action dimension of $G$ is $\le 2 \gdim(G)$. 
Similarly, Theorem \ref{t:main} implies the following.
\begin{Cor}
Let $G$ be a group that admits a finite $K(G,1)$. Then the pseudomanifold action dimension of a group $G$ is $\le 2\gdim(G)-1$.
\end{Cor}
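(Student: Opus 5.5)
Here $\gdim(G)=n$ means $G$ has a finite $n$-dimensional $K(G,1)$, and I take the pseudomanifold action dimension to be realized by contractible pseudomanifolds with isolated singularities, possibly with boundary, as produced by Theorem \ref{t:main}. The plan is to copy the usual derivation of the action dimension bound from Stallings' theorem, with Theorem \ref{t:main} replacing Stallings.

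First I fix a finite $K(G,1)$ of dimension $n$. By the simplicial approximation result cited before the first corollary, it is homotopy equivalent to a finite $n$-dimensional simplicial complex $X$, and $X$ is again aspherical with $\pi_1 X \cong G$. Assume $n\ge 2$. By Theorem \ref{t:main}, $X$ is a deformation retract of a compact orientable $(2n-1)$-dimensional pseudomanifold with boundary $P(X)$ with isolated singularities. Then $P(X)\simeq X$, so $P(X)$ is aspherical with fundamental group $G$.

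Next I pass to the universal cover $\widetilde{P(X)}$. Being the universal cover of an aspherical space, it is contractible. $G$ acts on it by deck transformations, freely and properly discontinuously, and cocompactly because $P(X)$ is compact. The remaining check is that $\widetilde{P(X)}$ is still a $(2n-1)$-dimensional pseudomanifold with boundary and isolated singularities. Give it the lifted triangulation. The covering map is a simplicial isomorphism on stars of vertices, so the link of each vertex in $\widetilde{P(X)}$ is isomorphic to the link of its image in $P(X)$, which is PL a $(2n-2)$-manifold, possibly with boundary. Purity and the other local conditions lift the same way, as does orientability. So $\widetilde{P(X)}$ is a contractible $(2n-1)$-pseudomanifold with a proper $G$-action, which gives the bound $2n-1$. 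This is the only step needing an argument, and it is just the local-isomorphism property of covering maps.

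The cases $n\le 1$ do not come from Theorem \ref{t:main}. If $n=0$, then $G$ is trivial and a point is a contractible $0$-manifold, which is within the bound $\le -1$ only under a degenerate convention. If $n=1$, then $G$ is free. The bound $2n-1=1$ holds via the universal cover of a graph, i.e.\ a tree, only if trees count as $1$-pseudomanifolds under the adopted convention. I would state these cases separately, or simply assume $\gdim(G)\ge 2$ as Theorem \ref{t:main} does. I expect the main (small) obstacle to be this bookkeeping about low dimensions and the precise definition of the invariant, not the main argument.
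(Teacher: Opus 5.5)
This is the same argument the paper has in mind, where it simply asserts that Theorem \ref{t:main} implies the corollary: take a finite $\gdim(G)$-dimensional simplicial $K(G,1)$ $X$, replace it by $P(X)$ using Theorem \ref{t:main}, and let $G$ act by deck transformations on the contractible universal cover of $P(X)$, whose vertex links are isomorphic to the links downstairs. Your caveat about low dimensions is correct and is something the paper glosses over: under the paper's definition a branching tree is not a $1$-pseudomanifold, so the statement genuinely needs $\gdim(G)\ge 2$ (it fails for nonabelian free groups), exactly as Theorem \ref{t:main} does.
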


Bestvina-Kapovich-Kleiner show that a coarsened version of the van Kampen obstruction gives a lower bound to the action dimension. It would be interesting to find a similar lower bound to the pseudomanifold action dimension. 
Note that Davis-Okun have conjectured that if $P$ is a closed aspherical $n$-pseudomanifold with isolated singularities then the $L^2$-homology of the universal cover $\widetilde P$ vanishes above dimension $\lceil\frac{n}{2}\rceil$ \cite[Conjecture 0.6]{do01}.

Since $2$-dimensional pseudomanifolds are pinched surfaces, there are many examples of $2$-dimensional groups whose pseudomanifold action dimension has to be equal to the bound $2\gdim(G)-1$. In higher dimensions, we do not know how to establish lower bounds. We also do not know whether the notion of the pseudomanifold action dimension does depends on whether we require only isolated singularities.

\begin{question}
    Does there exist a group $G$ whose pseudomanifold action dimension differs depending on whether pseudomanifold is required to have isolated singularities?
\end{question}
\end{remark} 

\subsection*{Related work}

The statement of Theorem \ref{t:main} and its proof was inspired by a talk that Jason Manning gave at BIRS in Fall 2024 on recent work with Ruffoni \cite{mr25}. 
They construct closed, locally CAT$(-1)$ $3$-pseudomanifolds whose fundamental groups are not cubulable; their method is also to thicken a certain $2$-dimensional seed complex (whose fundamental group has Property $(T)$ and is hyperbolic) to a $3$-pseudomanifold with boundary and then apply a version of the reflection group trick. Our construction for thickening $2$-dimensional simplicial complexes is essentially the same as theirs, though with a general seed complex. Therefore, we have not attempted to control any geometry of our complex. On the other hand, our construction allows for more flexible inputs, for instance, it follows from Theorem \ref{t:main2} that there are closed, aspherical $3$-pseudomanifolds whose fundamental groups have unsolvable word problem \cite{CollinsMiller99}. 

Thickenings of $2$-complexes to $3$-pseudomanifolds have also been studied earlier. For instance, Quinn showed that every finite $2$-dimensional complex is homotopy equivalent to a $3$-pseudomanifold with boundary, whose links of vertices are homeomorphic to $\mathbb D^2$, $\mathbb S^2$, or $\rr P^2$ (see e.g.~\cite{HogAngeloniMetzler93}). Since in the Quinn's construction the links can be homeomorphic to $\rr P^2$, the pseudomanifold in not orientable in general. Also, there does not need to be a retraction from the pseudomanifold to the complex.

\subsection*{Acknowledgements}
The first author is supported by NSF grants DMS-1926686 and DMS-2238198, and an NSERC Discovery Grant.
The second author is supported by NSF grants DMS-2203325 and DMS-2505290.

\section{Simplicial Complexes and pseudomanifolds}
\subsection{Simplicial Complexes in $\mathbb R^n$}

We will mainly follow Bryant's notes on PL topology \cite{Bryant2002}.

A \emph{$k$-simplex $\sigma$ in $\rr^n$} (with $n\geq k)$ is the convex hull of $k+1$ affinely independent points $\{v_0, \dots, v_k\}$ in $\rr^k$, i.e.\ where the $k$ vectors $v_1-v_0, \dots v_k-v_0$ are linearly independent. The value $k$ is the \emph{dimension of $\sigma$}.
A \emph{face} of a simplex $\sigma$ is a simplex spanned by any proper subset of the vertices of $\sigma$. 
A \emph{facet} of $\sigma$ is a codimension-one face.

A \emph{simplicial complex} $X$ in $\rr^n$ is a collection of simplices in $\rr^n$ satisfying:
\begin{itemize}
    \item for every simplex $\sigma\in X$ every face $\gt$ of $\sigma$ belongs to $X$.
    \item any non-empty intersection of two simplices $\sigma_1, \sigma_2\in X$ is a face of each of $\sigma_1, \sigma_2$.
\end{itemize}
The union $|X|$ of all simplices of $X$ is a subspace of $\rr^n$.
We abuse the notation and denote by $X$ both this topological space and its combinatorial structure. 
When we write $|X|=|Y|$ we mean that the simplicial complexes $X$ and $Y$ determine the same subspaces of $\rr^n$ but do not necessarily have the same combinatorial structures. 
We say $X$ is a \emph{simplicial complex} if $X$ is a simplicial complex in $\mathbb R^n$ for some $n\in \mathbb N$.
A simplicial complex $X$ viewed combinatorially has a natural decomposition into $X^{(0)}\cup X^{(1)}\cup \dots \cup X^{(i)}\cup \dots$ where $X^{(i)}$ is the set of all simplices of dimension $i$.  

A \emph{simplicial map} $f:X\to Y$ is a map which restricted to each simplex of $X$  is a linear map onto a simplex of $Y$.
Two simplicial complexes $X,Y$ are \emph{isomorphic} if there exists a simplicial map $f: X\to Y$ which is a homeomorphism of subsets of $\rr^n$.

Two simplicial complexes $X,Y$ are \emph{PL-homeomorphic} if there exists subdivisions (see Section~\ref{sec: subdivision} for the definition of a subdivision) of $X$ and $Y$ which are isomorphic. 
A triangulation of a manifold $M$ is a simplicial complex $X$ and a homeomorphism $|X| \rightarrow M$. 
We also say a simplicial complex $X$ is $\emph{PL-
homeomorphic}$ to a manifold $M$ if $X$ is PL-homeomorphic triangulation of $M$. 

A \emph{free face} of a simplicial complex $X$ is a simplex $\gt$ which is a face of exactly one simplex. The \emph{boundary $\partial X$} of $X$ is the union of its free faces, and the \emph{interior $X^\circ$} of $X$ is $X-\partial X\subseteq \mathbb \rr^n$.
In particular, the boundary $\partial \sigma$ of a simplex $\sigma$ is the union of its facets.

Let $v$ be a vertex of $X$. The \emph{star} of $v$ in $X$, denoted $\St_X(v)$, is the subcomplex $\{\gs \in X| v*\gs \in X\}$ where $v*\gs$ is a simplex which is the convex hull of $v\cup \gs$, i.e.\ $v*\gs$ is spanned by $v$ and the vertices of $\gs$.
The \emph{link} of $v$ in $X$, denoted $\Lk_X(v)$, is the subcomplex of $\St_X(v)$ consisting of the simplices of $\St_X(v)$ which do not contain $v$.

\subsection{Abstract simplicial complexes}

At certain points, it is more convenient to work with abstract simplicial complexes. 
Given a vertex set $V$, a simplicial complex $X$ is a collection of subsets of $V$ which is closed under inclusion and contains all singletons. It is naturally a poset under inclusion. 
If the cardinality of $V$ is $n$, then the geometric realization of $X$ is a simplicial complex in $\mathbb{R}^n$ sitting inside the standard simplex $\Delta^{n-1}$, where we include faces of $\Delta^{n-1}$ exactly corresponding to subsets in $X$. 

If $X$ is a simplicial complex and $L$ a subcomplex, then we let $\Cone_X(L)$ denote the simplicial complex obtained from $X$ by adding a new vertex $v$ and the subsets $\tau \cup v$ for $\tau \in L$. If $X$ is a simplicial complex in $\rr^n$, then $\Cone_X(L)$ is naturally a simplicial complex in $\rr^{n+1}$.

We also record the following easy fact:
\begin{lem}\label{lem: retracting cones}
    Let $L,N$ be simplicial complexes in $\rr^n$, and let $H: N\times I\to N$ be a strong deformation retraction of $N$ onto $L$.
    Then there exists a strong deformation retraction of $\Cone_v(N)$ onto $\Cone_v(L)$ whose restriction to $N$ is $H$.
\end{lem}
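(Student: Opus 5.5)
We will construct the deformation retraction explicitly using the cone coordinates. Every point of $\Cone_v(N)$ other than the cone point $v$ can be written as $(1-s)x + sv$ with $x\in N$ and $s\in[0,1)$; here $x$ and $s$ are uniquely determined. If we regard the cone abstractly, or as the geometric join in $\rr^{n+1}$ with $v$ placed off the affine hull of $N$, this parametrization is just the quotient map $N\times[0,1]\to\Cone_v(N)$ that collapses $N\times\{1\}$ to $v$. We then define
\[
\widehat H\bigl((1-s)x+sv,\,t\bigr) = (1-s)H(x,t)+sv, \qquad \widehat H(v,t)=v .
\]
This is exactly the map $H\times \mathrm{id}_{[0,1]}$ on $N\times[0,1]$, followed by the quotient to the cone, with the extra time coordinate carried along.

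The verifications are routine and go in order. First, $\widehat H$ is well defined on the quotient: at $s=1$ every point maps to $v$, independently of $x$ and $t$. Second, $\widehat H$ is continuous. It is induced by the continuous map $N\times[0,1]\times I\to \Cone_v(N)$ through the quotient map $N\times[0,1]\times I\to \Cone_v(N)\times I$. That quotient map is a genuine quotient because $I$ is locally compact Hausdorff, and alternatively because everything is compact Hausdorff, so the map is closed. Third, the values lie in the cone: $H(x,t)\in N$, so $(1-s)H(x,t)+sv\in\Cone_v(N)$. Fourth, we check the deformation retraction properties. At $t=0$ the map is the identity. At $t=1$ the image lies in $\Cone_v(L)$, since $H(x,1)\in L$. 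Points of $\Cone_v(L)$ stay fixed throughout, because $H$ fixes $L$ pointwise and $v$ is fixed by definition. Finally, setting $s=0$ recovers $H$ on $N$.

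The only point needing care is the identification of $\Cone_v(N)$, as defined combinatorially via $\Cone_X(L)$, with the topological cone $N\times[0,1]/N\times\{1\}$. We will settle this by observing that, with $v$ in general position in $\rr^{n+1}$, every point of the simplicial cone has the unique form $(1-s)x+sv$ with $x\in N$ whenever $s<1$. The map $N\times[0,1]\to\Cone_v(N)$ is therefore a continuous surjection from a compact space onto a Hausdorff space that collapses exactly $N\times\{1\}$, so it is a quotient map. Note that $H$ is not required to be simplicial, so nothing further is needed.
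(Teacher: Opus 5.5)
Your proof is correct. The paper records this lemma as an easy fact and gives no proof, and your cone-coordinate extension $\widehat H((1-s)x+sv,t)=(1-s)H(x,t)+sv$ is the standard argument being left to the reader. The one implicit hypothesis is that $N$ is finite: you use compactness of $N\times[0,1]$ to identify the Euclidean cone with the quotient cone and to get continuity at $v$. That hypothesis holds wherever the paper applies the lemma.
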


\subsection{Barycentric subdivision and spine}\label{sec: subdivision}
Let $X, Y$ be simplicial complexes in $\rr^n$. Then $Y$ is a \emph{subdivision} of $X$ if $|X| = |Y|$ and every simplex of $Y$ is contained in a simplex of $X$. 

For every simplex $\gs$ of $X$, let $v_{\gs}$ be a point in the interior of $\gs$ when $\dim \gs >0$ and let $v_{\gs} = \gs$ when $\dim \gs = 0$. 
The corresponding \emph{barycentric subdivision} of $X$ is the simplicial complex $B(X)$ in $\rr^n$ with $$B(X)^{(0)} = \{v_{\gs} : \gs \text{ is a simplex of } X\}$$
and where vertices $v_{\gs_0}, \dots, v_{\gs_i}$ span an $i$-simplex if and only if $\gs_0\subseteq \gs_1\subseteq \dots \subseteq \gs_i$. 
We will also denote $B(X)$ by $X'$.

\begin{remark}
To make such a choice canonical, we can use barycentric coordinates; if $\gs$ is the span of $\{v_0, v_1, \dots v_k\}$, we set $v_\gs$ to $\sum_{i=0}^k\frac{1}{k+1} v_i$. However, in our construction we will require some flexibility in the choice of $v_\gs$
\end{remark}

We will need the following:
\begin{lem}[{\cite[Chap 1, Lem 4]{Zeeman63}}]\label{lem: zeeman}
    Let $K, X$ be simplicial complexes in $\rr^n$ such that $|K|\subseteq|X|$. Then there exists an $r$-fold barycentric subdivision $B^{r}(X)$ of $X$ such that some subdivision $\bar K$ of $K$ is a subcomplex of $B^r(X)$.
\end{lem}

\begin{defn}[Spine $K(X)$]
Given a simplicial complex $X$ and its barycentric subdivision $B(X)$, we define \emph{the spine $K(X)$ of $X-X^{(0)}$} as the subcomplex of $B(X)$ spanned by the vertices $\{v_\gs : \gs \in X^{(k)} \text{ for } k \ge 1\}$, i.e.\ all vertices of $B(X)$ except for those corresponding to $X^{(0)}$.
\end{defn}
For short, we will omit ``of $X-X^{(0)}$'', and refer to $K(X)$ as just the \emph{spine}. 
In the literature $K(X)$ is sometimes referred to as the \emph{dual of the $0$-skeleton of $X$} (see e.g.\ \cite{Bryant2002}). 

\begin{figure}
    \centering
    \includegraphics[width=\linewidth]{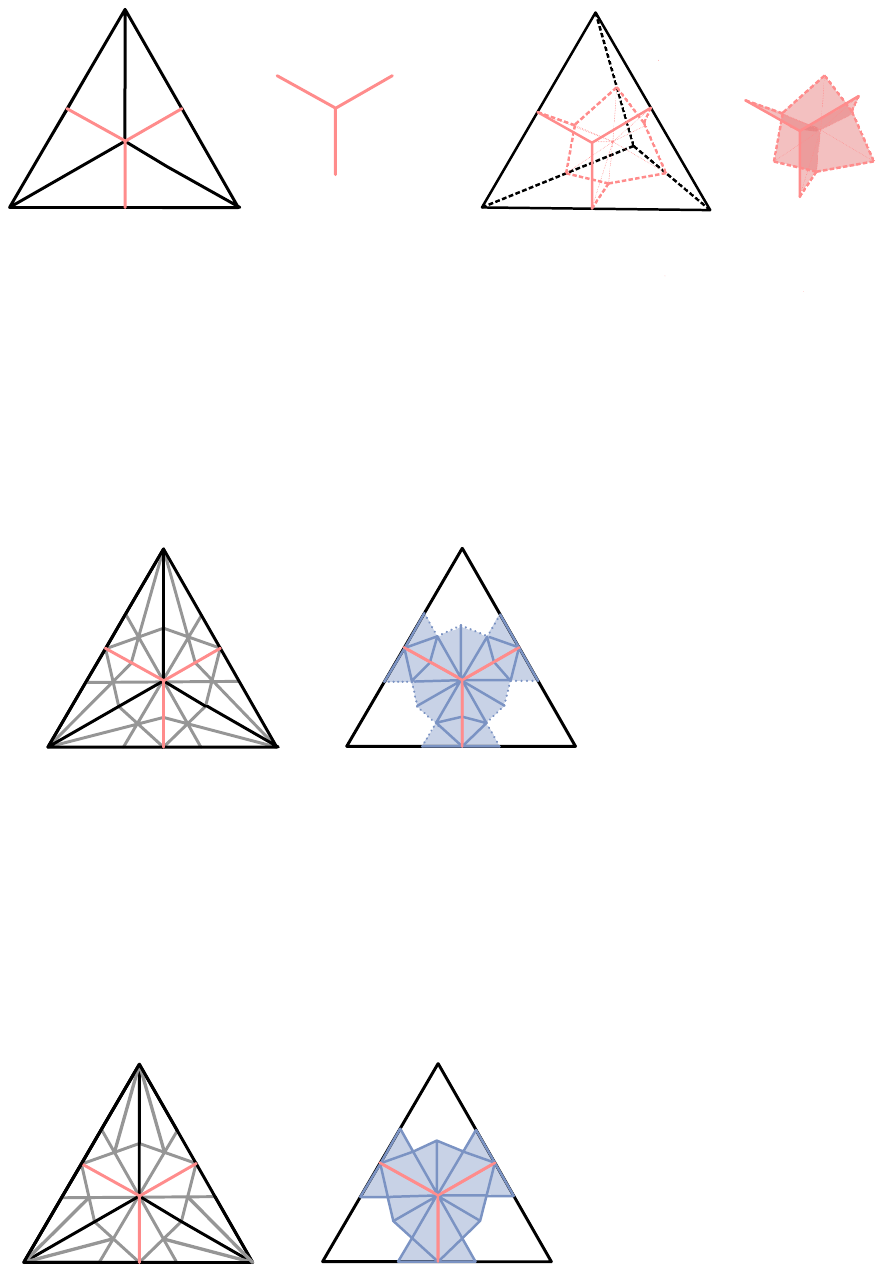}
    \caption{The spine $K(\gs)$ for a $2$- and $3-$simplex, viewed as a subcomplex of the barycentric subdivision $B(\gs)$ or as its own simplicial complex.}
    \label{fig:K(X)}
\end{figure}

\begin{lem}\label{lem: spine}
The spine $K(X)$ is a deformation retract of $X - X^{(0)}$.
\end{lem}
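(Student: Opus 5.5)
The plan is to build the deformation retraction one simplex at a time, using the fact that a punctured simplex retracts radially onto its spine.

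\emph{Single simplex.} Let $\sigma = [w_0,\dots,w_k]$ be a simplex of $X$ with $k\ge 1$. Work inside $B(\sigma)$. Each point $x \in \sigma$ that is not a vertex lies in a simplex of $B(\sigma)$ spanned by $v_{\tau_0},\dots,v_{\tau_i}$ with $\tau_0\subsetneq\dots\subsetneq\tau_i$. Write $x = \sum t_j v_{\tau_j}$. If $\tau_0$ is a vertex $w$ of $\sigma$, set $t_0 = t$; otherwise set $t=0$. The point $x$ is not a vertex, so $t<1$. Define
\[
r(x) = \frac{1}{1-t}\sum_{j\ge 1} t_j v_{\tau_j} \quad\text{if } \tau_0 \text{ is a vertex}, \qquad r(x)=x \text{ otherwise}.
\]
This is radial projection away from $w$ within each simplex of $B(\sigma)$ that has $w$ as a vertex. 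Its image is the face opposite $w$, and that face is spanned by barycenters of positive-dimensional simplices, so it lies in $K(\sigma)$. The straight-line homotopy $H(x,s) = (1-s)x + s\,r(x)$ stays inside the simplex of $B(\sigma)$ containing $x$. It never reaches a vertex of $X$, since the coefficient of $v_{\tau_0}$ stays below $1$. It fixes $K(\sigma)$ pointwise.

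\emph{Compatibility.} The formulas agree on overlaps of simplices of $B(X)$. The reason is that the recipe only uses the unique decomposition of $x$ in barycentric coordinates of $B(X)$, together with the question of whether the minimal vertex is an original vertex of $X$. Both are intrinsic to the smallest simplex of $B(X)$ containing $x$. So $r$ and $H$ are defined globally on $X - X^{(0)}$.

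\emph{Continuity.} This is the main point to check. On each closed simplex of $B(X)$ with the vertex $v_{\tau_0}=w$ removed, $r$ is continuous, because $1/(1-t)$ is continuous wherever $t<1$. On faces that do not contain $w$, we have $t=0$ and the formula reduces to the identity, which matches the second case. Each closed simplex of $B(X)$ minus its $X^{(0)}$-vertices is closed in $X - X^{(0)}$, and there are finitely many of them. The pasting lemma then gives continuity of $r$ and $H$ on $X-X^{(0)}$.

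\emph{Conclusion.} Putting these together, $H$ is a strong deformation retraction of $X - X^{(0)}$ onto $K(X)$. For general choices of the points $v_\sigma$ the argument is unchanged, since it only uses the simplicial structure of $B(X)$.
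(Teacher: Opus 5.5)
Your proof is correct and is essentially the paper's argument. The paper writes $B(X)$ as the union of the stars $\St_{B(X)}(v)$, $v\in X^{(0)}$, which meet only in their links, and glues the radial retractions of $\St_{B(X)}(v)-\{v\}$ onto $\Lk_{B(X)}(v)$; this is exactly your projection away from the unique original vertex in each simplex of $B(X)$, and you additionally make explicit the formula, its well-definedness via barycentric coordinates, and the pasting-lemma continuity check, which uses finiteness of $X$ as in the paper's setting.
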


\begin{proof}
First note that $K(X) = \bigcup_{v\in X^{(0)}} \Lk_{B(X)}(v)$ 
 and $B(X) = \bigcup_{v\in X^{(0)}} \St_{B(X)}(v)$. 
 The different stars possibly intersect only in their links. 
For each vertex $v \in X^{(0)}$,  there is a deformation retraction $$r_v: \St_{B(X)}(v) - \{v\} \rightarrow \Lk_{B(X)}(v).$$ Thus we can define $r:[0,1]\times (X - X^{(0)}) \to K(X)$
given piecewise as $r(p) = r_v(p)$ if $p\in \St_{B(X)}(v)$.
\end{proof}

\subsection{Relative barycentric subdivision}
Let $K\subseteq X$ be a subcomplex.
For each simplex $\gs$ in $X$ not contained in $K$, let $v_\gs$ be a point in the interior of $\gs$ when $\dim\gs>0$, and let  $v_\gs=\gs$ when $\dim\gs = 0$.
The corresponding \emph{barycentric subdivision of $X$ relative to $K$} is the simplicial complex $B(X,K)$ in $\mathbb R^n$ with 
$$ B(X,K)^{(0)} = \{v_\gs:\gs \text{ is a simplex in }X \text{ not contained in }K\} \cup K^{(0)}
$$
and where vertices $v_{\gs_1}, \dots, v_{\gs_i}, w_1, \dots, w_j$ span a simplex if $w_1, \dots, w_j$ span a simplex $\gt$ in $K$, and $\gt\subseteq v_{\gs_1}\subseteq \dots \subseteq v_{\gs_i}$. See Figure~\ref{fig:U(K(X))}.

\subsection{Regular neighborhoods}
Let $X$ be a simplicial complex in $\mathbb R^n$. A subcomplex $K\subseteq X$ is \emph{full} if a simplex $\gs$ of $X$ belongs to $K$ if and only if all of its vertices belong to $K$. 
Let $K$ be a full subcomplex of a simplicial complex $X$. 
The \emph{simplicial neighborhood} $N(K,X)$ of $K$ in $X$ is the subcomplex
$$\bigcup_{v \in K} \St_X(v).$$

In other words, $N(K,X)$ is the smallest subcomplex of $X$ containing every simplex which intersects $L$. The \emph{boundary} of $N(K,X)$, denoted $\dot N(K,X)$, is the subcomplex of $N(K,X)$ containing simplices which do not intersect $K$.

A \emph{regular neighborhood of $K$ in $X$} is the simplicial neighborhood $N(K, X')$ for $X'$ a barycentric subdivision of $X$ relative to $K$. See Figure~\ref{fig:U(K(X))}. 

\begin{lem}\label{lem: boundary of a regular neighborhood of the spine}
Let $K=K(X)$ be the spine of a simplicial complex $X$. Then the boundary $\dot N$
of the regular neighborhood $N$ of $K$ in $X$ is the disjoint union of $\Lk_{X''}(v)$ over all $v\in X^{(0)}$.
\end{lem}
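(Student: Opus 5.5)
We work combinatorially in the iterated subdivisions. Here $X' = B(X)$ and $K = K(X)$ is the full subcomplex of $X'$ spanned by the barycenters $v_\gs$ with $\dim\gs\ge 1$. The regular neighborhood is $N = N(K, X'')$, where $X''$ is the barycentric subdivision of $X'$ relative to $K$. The plan is to identify which simplices of $X''$ lie in $N$ and which lie in $\dot N$, then group them by the original vertex $v\in X^{(0)}$.

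First we describe $X''$. A simplex of $X'$ is a chain $\gs_0\subsetneq\cdots\subsetneq\gs_i$ of simplices of $X$, and it lies in $K$ exactly when $\dim\gs_0\ge 1$. The simplices of $X'$ not in $K$ are therefore exactly the chains starting at a vertex $v=\gs_0$ of $X$; these all lie in $\St_{X'}(v)$. A simplex of $X''$ has the form $\gt * \{v_{\rho_1},\dots,v_{\rho_j}\}$, where $\gt\in K$ (possibly empty), the $\rho_m$ are simplices of $X'$ not in $K$, and $\gt\subseteq\rho_1\subsetneq\cdots\subsetneq\rho_j$. Each $\rho_m$ contains a unique vertex of $X^{(0)}$, its minimal element. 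Because the $\rho_m$ form a chain, they all share the same vertex $v\in X^{(0)}$; call it the vertex of the simplex (this is defined when $j\ge 1$).

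Next we compute $\dot N$. Since $K$ is full in $X'$, it is full in $X''$ as well: any simplex of $X''$ all of whose vertices lie in $K$ has $j=0$. A simplex meets $K$ precisely when $\gt\neq\emptyset$. So $N$ consists of the faces of simplices with $\gt\neq\emptyset$, and $\dot N$ consists of the simplices $\{v_{\rho_1},\dots,v_{\rho_j}\}$ with $\gt=\emptyset$ that are faces of some simplex $\gt*\{v_{\rho_m}\}$ with $\gt\ne\emptyset$. That last condition holds iff $\rho_1$ has a nonempty face in $K$, i.e.\ iff $\rho_1\neq\{v\}$, equivalently $\dim\rho_1\ge1$.

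Now we compare with $\Lk_{X''}(v)$. For $v\in X^{(0)}$, the vertex $v$ is not in $K$ but is itself a simplex $\rho=\{v\}$ of $X'$ not in $K$, so $v = v_{\{v\}}$. The simplices of $X''$ containing $v$ are therefore those with $\gt=\emptyset$ and $\rho_1=\{v\}$. Their opposite faces are the chains $\{v\}\subsetneq\rho_2\subsetneq\cdots$ of simplices of $X'$ containing $v$, and their joins with $\gt$ are allowed only if $\gt\subseteq\{v\}$, forcing $\gt=\emptyset$. Hence
\[
\Lk_{X''}(v)=\{\{v_{\rho_1},\dots,v_{\rho_j}\} : v\in\rho_1\subsetneq\cdots,\ \dim\rho_1\ge1\},
\]
which is exactly the set of simplices of $\dot N$ whose vertex is $v$.

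Finally, we get disjointness directly: every vertex $v_\rho$ of $\dot N$ has $\rho\notin K$, and such a $\rho$ contains a unique vertex of $X$, so the links for distinct $v$ share no vertices. Putting these together gives $\dot N=\bigsqcup_{v\in X^{(0)}}\Lk_{X''}(v)$.

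The main obstacle is bookkeeping in the relative subdivision. We must check carefully that a simplex of $X'$ not in $K$ contains exactly one vertex of $X^{(0)}$. This holds because a chain's vertices of $X^{(0)}$ can only come from a $0$-dimensional $\gs_0$. We must also handle the face/"meets $K$" condition correctly, which is where the fullness of $K$ is used.
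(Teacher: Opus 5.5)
Your argument is correct and is essentially the paper's: both rest on the chain-of-chains description of the relative subdivision $X''$, and on the observation that a simplex of $X''$ with no vertex in $K$ lies in $N$ exactly when its smallest chain $\rho_1$ is not $\{v\}$. The paper packages this as ``the vertex sets agree and both subcomplexes are full,'' while you compare simplices directly and also check disjointness explicitly (each chain has a unique minimal vertex), which the paper leaves implicit.
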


\begin{proof}

A vertex in $X''$ corresponds to a chain of simplices in $X$.  
The vertices in $\Lk_{X''}(v)$ for $v \in X^{(0)}$ are chains with minimal term a simplex $\tau$ properly containing $v$. 

Similarly the vertices in $B(X'', K)^{(0)} - K^{(0)}$ correspond to simplices in $X'$ not contained in $K$, i.e. chains of simplices with minimal element a simplex not in $K$. One of these spans an edge with a vertex $k \in K^{(0)}$ if and only if $k$ is contained in the minimal element, i.e. the minimal element is a simplex $\tau$ properly containing some vertex $v \in X^{(0)}$.
Therefore, the vertices of $\bigcup \Lk_{X''}(v)$ and $\dot N(K, X'')$ are the same. 

We claim that $\dot N(K, X'')$ is a full subcomplex, since $\bigcup \Lk_{X''}(v)$ is also full we will be done. A simplex $\gs$ in $X''$ correponds to nested chains of simplices of $X$. Since the chains are nested, each minimal element of a chain in $\gs^{(0)}$ contains a simplex $\gs_0$ of $X$. If the vertices of $\gs$ are in $\dot N(K, X'')$, then by the above $\gs_0$ is not a vertex of $v$. Therefore, $\gs$ and $(\gs_0 - \{v\})$ spans a simplex in $N(K, X')$, and hence $\gs \subset \dot N(K, X')$
\end{proof}

Any two regular neighborhoods corresponding to barycentric subdivisions $X_1'$ and $X_2'$ relative to $K$ are canonically homeomorphic  via a homeomorphism that fixes $K$.
Regular neighborhoods are also preserved under taking subdivisions:
\begin{lem}[{\cite[Prop 3.1]{Bryant2002}}]\label{l:neighborhoodsubdivision}
Suppose that $K$ is a full subcomplex of a simplicial complex $X$ in $\rr^n$, and suppose $X_1$ is a subdivision of $X$ inducing a subdivision $K_1$ of $K$. 
Then there is a regular neighborhood $N$ of $K$ in $X$ and a regular neighborhood $N_1$ of $K_1$ in $X_1$ so that 
$|N| = |N_1|$. 
\end{lem}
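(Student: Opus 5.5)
This is a result cited from Bryant's notes \cite[Prop 3.1]{Bryant2002}, and I would prove it by building both neighborhoods out of a single "distance-to-$K$" function that is linear on simplices.

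First, since $K$ is full in $X$, there is a simplicial map $f: X\to[0,1]$ determined on vertices by $f(v)=0$ for $v\in K^{(0)}$ and $f(v)=1$ otherwise. Fullness gives $f^{-1}(0)=|K|$: a point with $f=0$ lies in a simplex all of whose barycentric weight sits on vertices of $K$, so it lies in a face spanned by $K$-vertices, and that face is in $K$. For a small $\epsilon\in(0,1)$, I claim the set $f^{-1}([0,\epsilon])$ is the underlying space of a regular neighborhood of $K$ in $X$. Concretely, for each simplex $\sigma$ of $X$ meeting $K$ but not contained in $K$, choose $v_\sigma$ to be a point of the interior of $\sigma$ on the level set $f=\epsilon$. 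This is possible because $\sigma$ has vertices with $f=0$ and vertices with $f=1$, and the level set crosses the interior of $\sigma$. For the remaining simplices, choose $v_\sigma$ arbitrarily in the interior. With these choices, the simplicial neighborhood $N(K,X')$ in the relative barycentric subdivision is exactly $f^{-1}([0,\epsilon])$. This I would check simplex by simplex: inside $\sigma$, the simplices of $X'$ touching $\sigma\cap K$ are cones from the face $\sigma\cap K$ over chains whose barycenters all sit on $f=\epsilon$, and these cones fill the region $\{f\le\epsilon\}\cap\sigma$.

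Second, I would do the same on $X_1$. The map $f$ is linear on each simplex of $X$, hence on each simplex of $X_1$, but it need not be simplicial on $X_1$, since vertices of $X_1$ can take intermediate values. So I would replace $f$ by the simplicial map $f_1$ on $X_1$ sending vertices of $K_1$ to $0$ and all other vertices to $1$. I expect this to be the main obstacle. One first needs $K_1$ to be full in $X_1$; if it is not, pass to a derived subdivision, which is harmless because of Lemma~\ref{lem: zeeman} and because regular neighborhoods are only defined up to such subdivisions. Then $f_1^{-1}(0)=|K_1|=|K|$, and by the first step $f_1^{-1}([0,\epsilon_1])$ is a regular neighborhood $N_1$ of $K_1$ in $X_1$.

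It remains to arrange $|N|=|N_1|$ exactly, not merely up to homeomorphism. Since both $f$ and $f_1$ are PL and vanish exactly on $|K|$, the key step is to show the sets $\{f\le\epsilon\}$ and $\{f_1\le\epsilon_1\}$ can be made to coincide by choosing the barycenters for $X$ more flexibly. This is precisely why the paper allows arbitrary interior points $v_\sigma$. The plan is to take $\epsilon_1$ small, so that $N_1$ lies inside the open star of $K$ in $X$, and then, for each simplex $\sigma$ of $X$ meeting $K$, take $v_\sigma$ to be a point of $\partial N_1\cap\mathrm{int}\,\sigma$. Inside each simplex $\sigma$ of $X$, the set $N_1\cap\sigma$ is star-shaped from $\sigma\cap K$ along the rays in the join structure $\sigma=(\sigma\cap K)*\tau$, and I would verify this by induction on $\dim\sigma$. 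Given that, the coning description from the first step shows that $N(K,X')$ equals $N_1$ simplex by simplex.
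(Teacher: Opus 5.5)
Your third step fails, and choosing the barycenters of $X$ more cleverly cannot rescue it. Inside a simplex $\sigma$ of $X$, the set $|N(K,X')|\cap\sigma$ is determined by finitely many points. Its frontier is the union of the simplices spanned by chains $v_{\sigma_1},\dots,v_{\sigma_i}$ of barycenters of faces $\sigma_1\subsetneq\cdots\subsetneq\sigma_i\subseteq\sigma$ that meet $K$ but are not in $K$, so it can bend only at those barycenters. The frontier $f_1^{-1}(\epsilon_1)\cap\sigma$ of your $N_1$, by contrast, bends along the simplices of $X_1$ inside $\sigma$.

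Concretely, take $\sigma=abc$ with $a=(0,0)$, $b=(1,0)$, $c=(0,1)$ and $K=\{a\}$. Take a triangulation $X_1$ of $\sigma$ in which the star of $a$ is $au_0u_1\cup au_1u_2\cup au_2u_3$, with $u_0=(0.3,0)$, $u_1=(0.3,0.1)$, $u_2=(0.1,0.3)$, $u_3=(0,0.3)$. For every $\epsilon_1\in(0,1)$, the set $f_1^{-1}([0,\epsilon_1])$ is the image of this star under the homothety of ratio $\epsilon_1$ centred at $a$. That is a pentagon with five genuine corners. For every choice of barycenters, $N(K,X')=av_{ab}v_\sigma\cup av_\sigma v_{ac}$ has at most four corners. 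Star-shapedness of $N_1\cap\sigma$ from $\sigma\cap K$ holds in this example, so it is not the property that would make the matching work.

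The missing observation is that you should not replace $f$ by $f_1$ at all. Your first step never uses that $f$ is simplicial. It only needs a function $g$ that is linear on the simplices of a complex $Y$, with $g^{-1}(0)=|L|$ for a full subcomplex $L$ and $g(w)>\epsilon$ at every vertex $w\notin L$. Then each simplex $\rho$ meeting $L$ but not in $L$ has interior points on $g^{-1}(\epsilon)$; choose $v_\rho$ there. A point $x\in\mathrm{int}\,\rho$ with $g(x)\le\epsilon$ can be pushed along the ray from $v_\rho$ through $x$ to a point $y\in\partial\rho$, and linearity along the ray gives $g(y)\le\epsilon$. Induction on $\dim\rho$ then yields $|N(L,Y')|=g^{-1}([0,\epsilon])$; the reverse inclusion holds because the vertices of $N(L,Y')$ have $g\in\{0,\epsilon\}$.

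Your original $f$ is linear on the simplices of $X_1$, vanishes exactly on $|K_1|=|K|$, and is positive at the finitely many vertices of $X_1$ not in $K_1$. So take $0<\epsilon<\min\{f(w): w\in X_1^{(0)}\setminus K_1^{(0)}\}$, which is automatically $<1$, and choose the barycenters of both $X'$ and $X_1'$ on $f^{-1}(\epsilon)$. This gives $|N(K,X')|=f^{-1}([0,\epsilon])=|N(K_1,X_1')|$ directly, with no matching step.

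Note also that $K_1$ is automatically full in $X_1$. A simplex of $X_1$ with vertices in $|K|$ lies in some $\sigma\in X$, and $\sigma\cap|K|$ is a single face of $\sigma$ because $K$ is full. So your fallback of passing to a derived subdivision is unnecessary, and it would not have been legitimate anyway, since it replaces the given $X_1$. The paper itself gives no proof of this lemma, only the citation to Bryant.
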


A \emph{combinatorial $d$-disc} is a simplicial complex PL-homeomorphic to a $d$-simplex, and a  \emph{combinatorial $d$-sphere} is a simplicial complex PL-homeomorphic to the boundary of a $(d+1)$-simplex.
A simplicial complex $X$ is a \emph{combinatorial $d$-manifold} if the link of each $p$-simplex is either a combinatorial $(d-p-1)$-sphere, or a combinatorial $(d-p-1)$-disc for $p=0, \dots, d$. The simplices with the links of the second kind form a subcomplex called the \emph{boundary} $\partial X$ of $X$. This subcomplex is itself a combinatorial $(d-1)$-manifold without boundary. If $X$ is a combinatorial $d$-manifold, then $|X|$ is obviously a topological $d$-manifold with (possibly empty) boundary $\partial |X| = |\partial X|$.

\begin{thm}[{\cite[Thm 3.6]{Bryant2002}}]\label{thm: boundary of a regular neighborhood}
    Suppose $K$ is a subcomplex of a combinatorial manifold $M$. Then a regular neighborhood $N$ of $K$ in $M$ is a combinatorial manifold. If $K$ is in the interior of $M$ then $\partial |N| = |\dot N|$.
\end{thm}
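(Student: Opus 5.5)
The plan is to compute links in $N = N(K, M')$ directly, where $M'$ is a barycentric subdivision of $M$ relative to $K$, and to check that every link is a combinatorial sphere or disc. First I reduce to the case where $K$ is full in $M$, by replacing $M$ with a first derived subdivision. By Lemma~\ref{l:neighborhoodsubdivision} this does not change $|N|$. After this reduction, every simplex $\rho$ of $M$ meets $K$ in a single (possibly empty) face $\rho_K = \rho\cap K$.

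The simplices of $N$ come in two kinds: those that meet $K$ and those that do not.

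\emph{Simplices meeting $K$.} Suppose $\sigma \in N$ has a vertex $w\in K$. Since $N \supseteq \St_{M'}(w)$, the whole star of $\sigma$ in $M'$ lies in $N$. Hence
$$\Lk_N(\sigma)=\Lk_{M'}(\sigma).$$
The right-hand side is a combinatorial sphere or disc, because $M'$ is a subdivision of a combinatorial manifold and combinatorial manifolds are preserved under subdivision. It is a sphere exactly when $\sigma$ lies in the interior of $M'$.

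\emph{Simplices of $\dot N$.} Suppose $\sigma\in\dot N$, and write its vertices as barycenters $v_{\rho_0},\dots,v_{\rho_i}$ with $\rho_0\subsetneq\dots\subsetneq\rho_i$. Since $\sigma \in N$, the minimal simplex $\rho_0$ satisfies $\rho_0\notin K$ and $\rho_0\cap K\neq\emptyset$. From the combinatorics of $B(M,K)$, the link of $\sigma$ in $M'$ splits as a join
$$\Lk_{M'}(\sigma)= B(\partial\rho_0,\partial\rho_0\cap K) * \Lambda,$$
where $\Lambda$ collects the barycenters of simplices lying strictly between the $\rho_j$'s and above $\rho_i$ (together with the link of $\rho_i$), and is a sphere or disc. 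Intersecting with $N$ only affects the first join factor. Therefore
$$\Lk_N(\sigma)= N\bigl(\rho_0\cap K,\ B(\partial\rho_0,\partial\rho_0\cap K)\bigr) * \Lambda .$$
Because $K$ is full, $\rho_0\cap K$ is a single proper face $\tau$ of $\rho_0$. The regular neighborhood of a face $\tau$ in the sphere $\partial\rho_0$ is the closed star of $\tau$ in the relative subdivision, which is PL a disc $\tau * \Lk_{\partial\rho_0}(\tau)$. The join of a disc with a sphere or disc is a disc. Hence every simplex of $\dot N$ has disc link in $N$.

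Together the two cases show that $N$ is a combinatorial manifold, and that $\dot N\subseteq \partial N$. For the second claim, assume $K$ lies in the interior of $M$. Then every simplex meeting $K$ lies in the interior of $M'$, so by the first case it has sphere link and is not in $\partial N$. Every simplex not meeting $K$ lies in $\dot N$ by definition. This gives $\partial N=\dot N$, hence $\partial|N|=|\dot N|$.

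The main obstacle is the join decomposition of $\Lk_{M'}(\sigma)$ in the relative subdivision, and in particular identifying its intersection with $N$ as the regular neighborhood of a face in $\partial\rho_0$. The bookkeeping is delicate when $\rho_i$ itself lies on $\partial M$, since then $\Lambda$ is a disc rather than a sphere. My plan there is to verify the decomposition at the level of chains of simplices, exactly as in the proof of Lemma~\ref{lem: boundary of a regular neighborhood of the spine}.
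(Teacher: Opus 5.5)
The paper does not prove this statement (it is quoted from Bryant), so I am judging your argument on its own. Most of it is sound:
\begin{itemize}
\item the join decomposition of $\Lk_{M'}(\sigma)$;
\item the observation that passing to $N$ only cuts the first factor down to $N_0=N\bigl(\tau, B(\partial\rho_0,\tau)\bigr)$ with $\tau=\rho_0\cap K$;
\item the treatment of simplices meeting $K$;
\item the deduction of $\partial N=\dot N$.
\end{itemize}
The gap is the sentence identifying $N_0$ with the closed star of $\tau$ in $B(\partial\rho_0,\tau)$, i.e.\ with $\tau*\Lk_{\partial\rho_0}(\tau)$. This is false as soon as $\dim\tau\ge 1$. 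The complex $N_0$ is the union of the stars of the \emph{vertices} of $\tau$. So it contains simplices $\{a,v_\gamma\}$ with $a\in\tau\cap\gamma$ but $\tau\not\subseteq\gamma$, and these are not in the star of $\tau$.

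Here is a concrete case. Let $\rho_0=abc$ be a triangle of a $2$-manifold $M$ with $\rho_0\cap K=ab$, and let $\sigma=\{v_{abc}\}$. Then $B(\partial\rho_0,ab)$ is the pentagon $a,b,v_{bc},c,v_{ca}$. Hence $\Lk_N(v_{abc})=N_0$ is the path $v_{ca}\,a\,b\,v_{bc}$, whereas your formula gives the single edge $ab$. Your argument therefore only establishes the disc property when $\rho_0\cap K$ is a vertex. That is not the situation in the paper's application, since the spine $K(X)$ has simplices of positive dimension.

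The conclusion that $N_0$ is a PL $(\dim\rho_0-1)$-disc is nevertheless true. But it is exactly the geometric content of the theorem, so it needs a real argument. One way is the following.
\begin{itemize}
\item The combinatorics of $B(\partial\rho_0,\tau)$ does not depend on the choice of barycenters, so choose them as in the regular $\epsilon$-neighborhood, with $0<\epsilon<1$.
\item Then $|N_0|=\partial\rho_0\cap f^{-1}[0,\epsilon]$, where $f\colon\rho_0\to[0,1]$ is the linear map that is $0$ on the vertices of $\tau$ and $1$ on the others.
\item The set $P=\rho_0\cap f^{-1}[0,\epsilon]$ is a convex polytope, and $P\cap f^{-1}(\epsilon)$ is one of its facets.
\item $|N_0|$ is the union of the remaining facets. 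This is a PL ball: project centrally onto that facet from a point just beyond it, as in a Schlegel diagram.
\end{itemize}
Alternatively, you can quote that a derived neighborhood of a collapsible subcomplex of a PL manifold is a ball. If you do, check that in your source this is not itself derived from the statement you are proving.

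A smaller point: your reduction to full $K$ cannot use Lemma~\ref{l:neighborhoodsubdivision}, because that lemma already assumes $K$ is full. In the paper, regular neighborhoods are only defined for full subcomplexes. So fullness is part of the hypothesis, not something you need to arrange.
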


\begin{defn}
    Let $K$ be a full subcomplex of $X$. 
    Given $\epsilon > 0$, the \emph{regular $\epsilon$-neighborhood} $N_\epsilon$ of $K$ in $X$ is a simplicial neighborhood constructed as follows. Since $K$ is full in $X$, the simplicial map $f: X \rightarrow [0,1]$ which sends the vertices of $K$ to $0$ and the other vertices to $1$ has the property that $f^{-1}(0) = K$.
    For any simplex $\gs$ of $N(K,X)$ which is not a simplex of $K$, we choose $v_\gs \in  \gs^\circ \cap f^{-1}(\epsilon)$. For any other simplex $\gs$ of $X$ which is not in $K$ choose $v_\gs\in \gs^\circ$ arbitrarily. This yields a barycentric subdvision $X'$ of $X$ mod $K$. Let $N_\epsilon = N(K, X')$.
\end{defn}

\begin{figure}
    \centering
    \includegraphics[width=0.7\linewidth]{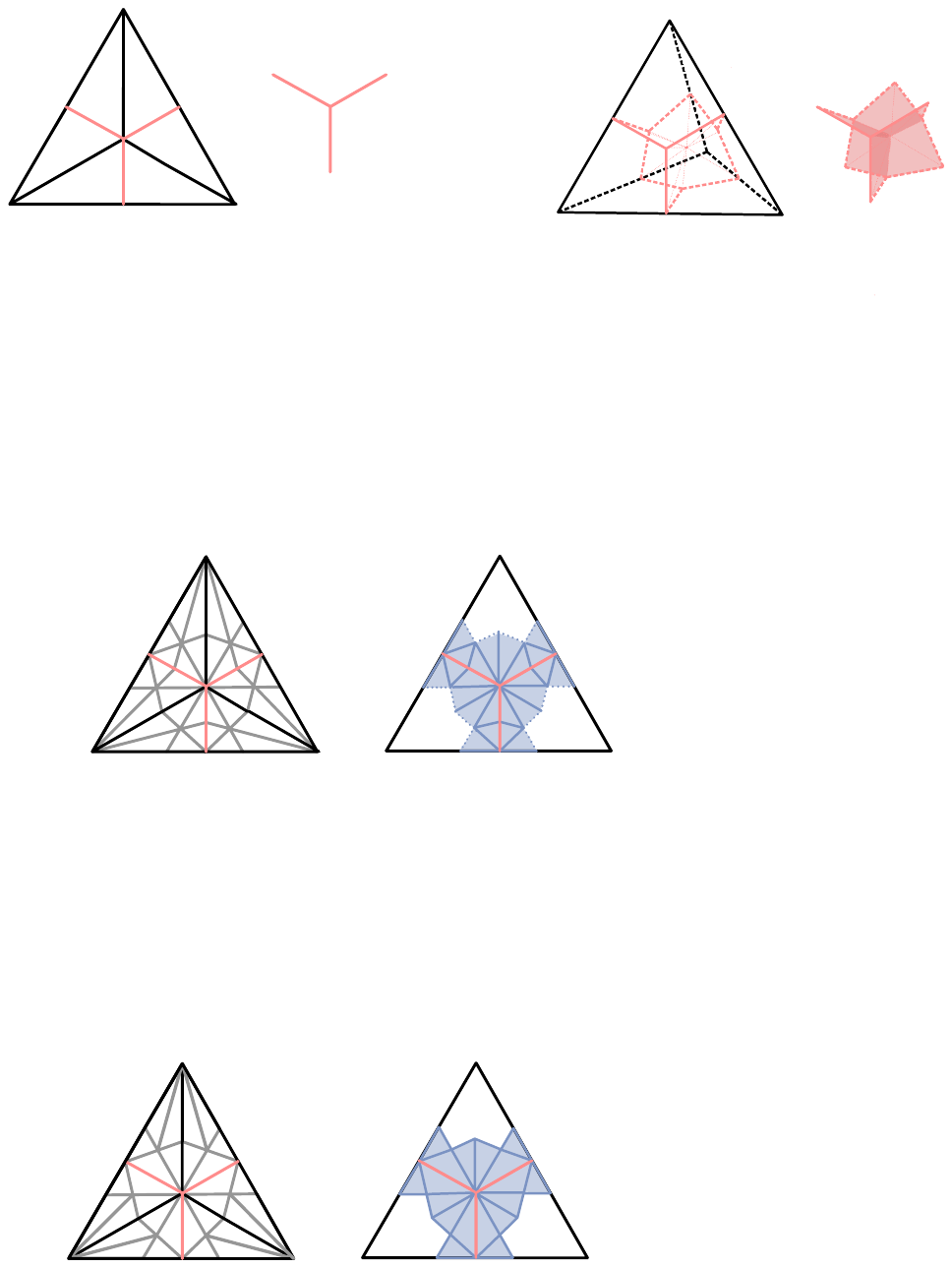}
    \caption{A barycentric subdivision of a $2$-simplex $\gs$ relative the spine $K(\gs)$, and the corresponding regular neighborhood $N(K(\gs))$.}
    \label{fig:U(K(X))}
\end{figure}

\subsection{Pseudomanifolds}
There are various definitions of pseudomanifolds (with and without boundary) in the literature. The weakest definition is the following.

\begin{defn}
    A finite connected $d$-dimensional simplicial complex $P$ is a \emph{$d$-dimensional pseudomanifold with boundary} if 
    \begin{enumerate}
        \item $P$ is \emph{pure}, i.e.\ $P$ is a union of its $d$-simplices, and
        \item each $(d-1)$-simplex is a face of one or two $d$-dimensional simplices.
    \end{enumerate}
    In this case, the \emph{boundary} $\partial P$ is the union of the $(d-1)$-simplices which are faces of single $d$-dimensional simplices. When $\partial P=\emptyset$, then we say $P$ is a \emph{closed pseudomanifold}.
\end{defn}

It follows that $\Lk_P(\gs)$ is a disjoint union of $(d-k-1)$-dimensional pseudomanifolds for every $k$-simplex $\gs$.
The definition often includes the assumption that a pseudomanifold is \emph{gallery connected} (\cite{Davis08}, also referred to as \emph{strongly connected} in other sources, e.g.\ \cite{Bjorner}): 
for every pair of $d$-simplices $\gs$ and $\gs'$ in $P$, there is a sequence of $d$-simplices $\gs = \gs_0, \gs_1,\dots \gs_k = \gs'$ such that the intersection $\gs_i\cap \gs_{i+1}$ is an $(d-1)$-simplex for all $i = 0, ..., k-1$. One downside is a link in a gallery connected pseudomanifold does not need to be gallery connected. 
The pseudomanifold that we construct in Theorem~\ref{t:main} is gallery connected, so our main results remains unchanged if we include this assumption. 

\begin{defn}
    A $d$-dimensional pseudomanifold $P$ with boundary has \emph{isolated singularities} if for every $k$-simplex $\gs$ with $k>0$, $\Lk_P(\gs)$ is PL-homeomorphic to the boundary of a $(d-k)$-simplex when $\gs\not\subseteq \partial P$, and $\Lk_P(\gs)$ is PL-homeomorphic to a $(d-k-1)$-simplex when $\gs\subseteq \partial P$.
\end{defn}
It follows that in a pseudomanifold with isolated singularities $P$, each vertex link $\Lk_P(v)$ is a combinatorial manifold:

\begin{lem}
    A pseudomanifold has isolated singularities if and only if the vertex links are combinatorial manifolds. 
\end{lem}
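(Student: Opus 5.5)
The key fact is the standard identity $\Lk_P(\gs) = \Lk_{\Lk_P(v)}(\gs - v)$, valid for every vertex $v$ of a $k$-simplex $\gs$ with $k\geq 1$. Here $\gs - v$ denotes the opposite $(k-1)$-face. Both directions of the lemma will follow from this identity, combined with the definitions of combinatorial manifold and of isolated singularities. First I verify the identity. A simplex $\rho$ lies in $\Lk_P(\gs)$ exactly when $\rho\cap\gs=\emptyset$ and $\rho*\gs\in P$. Writing $\rho*\gs = v*((\gs-v)*\rho)$, this says that $(\gs - v)*\rho \in \Lk_P(v)$ with $\rho$ disjoint from $\gs - v$. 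That is precisely the condition $\rho\in \Lk_{\Lk_P(v)}(\gs-v)$.

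Next I match the boundaries. Suppose $P$ is a $d$-pseudomanifold, so $L=\Lk_P(v)$ is $(d-1)$-dimensional and pure. A $(d-2)$-simplex $\tau$ of $L$ is a free face of $L$ if and only if $v*\tau$ is a free $(d-1)$-face of $P$. Hence $\gs\subseteq\partial P$ exactly when $\gs - v\subseteq \partial L$ (for $\gs\ni v$). The one point needing care is that "$\gs\subseteq\partial P$" must mean that $\gs$ lies in some boundary $(d-1)$-simplex, and I will read the definition that way.

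For the forward direction, assume $P$ has isolated singularities and let $\tau$ be a $p$-simplex of $L=\Lk_P(v)$. Then $\gs = v*\tau$ is a $(p+1)$-simplex with $p+1>0$. By the identity, $\Lk_L(\tau)=\Lk_P(\gs)$. By hypothesis this link is PL-homeomorphic to $\partial\Delta^{d-p-1}$ or to $\Delta^{d-p-2}$, according to whether $\gs\subseteq\partial P$, equivalently whether $\tau\subseteq\partial L$. These are exactly a combinatorial $(d-1-p-1)$-sphere or $(d-1-p-1)$-disc, as required for $L$ to be a combinatorial $(d-1)$-manifold. The case $\tau=\emptyset$ is not needed, since the definition only quantifies over $p\ge 0$.

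For the converse, assume every vertex link is a combinatorial $(d-1)$-manifold, and let $\gs$ be a $k$-simplex with $k>0$. Choose any vertex $v$ of $\gs$. Then $\Lk_P(\gs)=\Lk_{\Lk_P(v)}(\gs-v)$, the link of a $(k-1)$-simplex in a combinatorial $(d-1)$-manifold. It is therefore a combinatorial $(d-k-1)$-sphere or disc, and by the boundary correspondence it is a disc exactly when $\gs\subseteq\partial P$. This is the condition for isolated singularities. The main obstacle is bookkeeping rather than substance: the boundary correspondence between $\partial P$ and $\partial\Lk_P(v)$, and consistency of dimension conventions. Everything else is formal once the link identity is established.
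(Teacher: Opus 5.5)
Your proposal is correct and follows the paper's own proof, which consists only of the identity $\Lk_P(v\ast\gs)\cong\Lk_{\Lk_P(v)}(\gs)$. You also write out the boundary bookkeeping that the paper leaves implicit. The one step you use without stating it is that in a combinatorial manifold, the simplices with disc links are exactly the faces of free codimension-one simplices; this is a standard PL fact.
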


\begin{proof}

Given a vertex $v \in P$ and $\gs \in \Lk_P(v)$, we have that $\Lk_P(\gs\ast v)$ is isomorphic to $\Lk_{\Lk_P(v)}(\gs)$. The conclusion immediately follows. 
\end{proof}

An \emph{orientation} of a $k$-simplex $\gs = \{v_0, \dots, v_k\}$ is an equivalence class of orderings of the vertices of $\gs$, where two orderings are equivalent if they differ by an even permutation. We write $(v_0, \dots, v_k)$ to represent the orientation given by the listed ordering, and denote the opposite orientation by $-(v_0, \dots, v_k)$. 

An orientation $(v_0, \dots, v_k)$ of $k$-simplex induces the orientation of its facets: the facet excluding the vertex $v_i$ has \emph{induced orientation} $(-1)^i(v_0, \dots, v_{i-1}, v_{i+1}, \dots v_k)$

\begin{defn}
    A $d$-dimensional pseudomanifold (with or without a boundary) $P$ is \emph{orientable} if there is an orientation of each $d$-simplex such that the induced orientations on each $(d-1)$-simplex contained in two $d$-simplices are opposite. Equivalently, $H_d(P, \partial P; \zz) \ne 0$. 
\end{defn}

Note that the links in an orientable pseudomanifold are orientable.

\section{Embedding simplicial complexes into pseudomanifolds}

In this section, we prove Theorem \ref{t:main}.

\subsection{Embedding a simplicial complex in Euclidean space}
A countable set $S$ of points in $\rr^n$ is said to be \emph{in general position},  if every subset $\{v_0 , v_1, \dots, v_k\}$ of $S$ with $k \le n$ spans a $k$-simplex.
More generally, for a simplicial complex $X$ we say that a map $f: X\to \rr^n$ which is linear on simplices is \emph{a general position map}, if $f$ restricted to the $0$-skeleton $X^{(0)}$ is an embedding and the set $f(X^{(0)})\subseteq \rr^n$ is in general position.

Suppose $f: X \rightarrow \rr^n$ is a continuous piecewise linear function. 
The \emph{singular set} of $f$ is the subset $S(f) = \text{Cl}\{x \in X : f^{-1} (f(x)) \ne \{x\}\}$. If $X$ is compact, then $S(f)$ intersects each simplex of $X$ in a finite union of linear subspaces. We define $\dim S(f)$ to be the largest dimension of such a  subspace. 

\begin{lem}\label{lem: dimension of the singular set}
    Let $X$ be a simplicial complex, and let $f:X\to \rr^n$ be a general position map. Suppose that $\gs_1, \gs_2$ are two simplices of $X$ of dimension $d_1$ and $d_2$, and let $d_3 = \dim\gs_1\cap \gs_2$ ($d_3=-1$ when $\gs_1\cap \gs_2 =\emptyset$). Then 
    $$\dim S(f_{|\gs_1\cup\gs_2}) \leq d_1+d_2-n \qquad \text{ if }\quad d_1 + d_2 - d_3 \ge n$$ and $S(f_{|\gs_1\cup\gs_2}) = \emptyset$ otherwise. 
\end{lem}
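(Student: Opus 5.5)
We reduce the lemma to a dimension count for intersections of affine subspaces in general position. Since $f$ is linear on each simplex and the images of the vertices are in general position, $f$ restricted to each simplex is injective: a simplex has at most $\dim X+1\le n+1$ vertices, which are affinely independent. We assume here $\dim X \le n$, which is implicit in using general position. So $f(\gs_i)$ is a $d_i$-simplex spanned by $f(\gs_i^{(0)})$. The singular set of $f|_{\gs_1\cup\gs_2}$ then consists exactly of the points $x\in\gs_1$, $y\in \gs_2$ with $f(x)=f(y)$ and $x\ne y$, together with their closure.

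Let $\gt=\gs_1\cap\gs_2$, and let $A_i$ be the affine span of $f(\gs_i)$. Consider the full vertex set $V=f(\gs_1^{(0)}\cup\gs_2^{(0)})$, which has $d_1+d_2-d_3+1$ points.

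\textbf{Case $d_1+d_2-d_3< n$.} Then $|V|\le n+1$, so by general position $V$ is affinely independent and spans a simplex $\Delta$. Since $f$ sends $\gs_1\cup\gs_2$ linearly onto two faces of $\Delta$, it is injective: points of $f(\gs_1)\cap f(\gs_2)$ lie in the common face $f(\gt)$, and a barycentric-coordinate argument shows they have a unique preimage. Hence $S=\emptyset$.

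\textbf{Case $d_1+d_2-d_3\ge n$.} We bound $\dim(A_1\cap A_2)$ using general position. Choose subsets $W_i\subseteq f(\gs_i^{(0)})$ so that the combined set $W_1 \cup W_2$ is in general position of the right size. The standard formula for affine subspaces spanned by points of a set in general position gives
$$\dim(A_1\cap A_2)=\max\bigl(\dim \operatorname{span}(f(\gt)),\ d_1+d_2-n\bigr).$$
The lower-dimensional piece $f(\gt)$ contributes no singular points, since $f$ is injective on $\gt$ and the same barycentric argument applies. The genuinely singular part lies in a subspace of dimension at most $d_1+d_2-n$, intersected with the simplices.

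The main technical point is justifying this dimension formula. It can be proved by counting: if the intersection were larger, a suitable collection of at most $n+1$ vertices of $V$ would be affinely dependent, contradicting general position. Concretely:
\begin{enumerate}
\item Pick a basis of $A_1\cap A_2$ beyond $f(\gt)$.
\item Exhibit an affine dependency among at most $n+1$ points of $V$.
\end{enumerate}

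Finally, $S(f_{|\gs_1\cup\gs_2})$ is the union of $f|_{\gs_1}^{-1}(A_1\cap A_2\cap f(\gs_2))$ and the analogous set in $\gs_2$. Each is an intersection of a simplex with an affine subspace of dimension at most $d_1+d_2-n$, so $\dim S\le d_1+d_2-n$, as claimed.
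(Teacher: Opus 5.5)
Your strategy is the paper's. Case 1 uses affine independence of the $d_1+d_2-d_3+1$ image vertices. Case 2 bounds $\dim(A_1\cap A_2)$, into which the singular set maps. Case 1 and the final reduction are fine. The gap is in Case 2. There, the bound on $\dim(A_1\cap A_2)$ is the entire content of the lemma, yet you only assert a ``standard formula'' and outline a plan (``pick a basis beyond $f(\gt)$, exhibit an affine dependency''). You never say where the dependency comes from. Worse, the formula is false as an equality. The points $(0,0),(1,0),(0,1),(1,1)$ are in general position in $\rr^2$, and the lines through the two disjoint pairs are parallel. So $A_1\cap A_2=\emptyset$, while your formula predicts dimension $0$. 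Only ``$\le$'' holds, which is all you need.

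The missing step is one line, and it is exactly the paper's argument. Since $|V|=d_1+d_2-d_3+1\ge n+1$ and any $n+1$ points of $V$ are affinely independent, $V$ affinely spans $\rr^n$. Hence the affine span $A_1\vee A_2$ of $A_1\cup A_2$ is all of $\rr^n$. If $A_1\cap A_2\ne\emptyset$, the affine Grassmann identity $\dim(A_1\cap A_2)+\dim(A_1\vee A_2)=d_1+d_2$ gives $\dim(A_1\cap A_2)=d_1+d_2-n$. If the intersection is empty there is nothing to prove.

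The hypothesis $d_1+d_2-d_3\ge n$ is literally $d_3\le d_1+d_2-n$. So the $\max$ with $\dim\spn f(\gt)$, the separate treatment of $f(\gt)$, and the sets $W_i$ are all unnecessary.

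Two smaller inaccuracies, neither affecting the bound. First, $S$ is only contained in, not equal to, the union in your last paragraph, since that union contains all of $\gt$. Second, points of $\gt$ can lie in $S$ because $S$ is a closure, so $f(\gt)$ does not ``contribute no singular points.''

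Your explicit assumption $d_1,d_2\le n$ is a genuine extra hypothesis, not a consequence of general position. The paper uses it silently via $\dim f(\gs_i)=d_i$. Without it the bound can fail, e.g.\ for $\gs_1$ an $(n+1)$-simplex and $\gs_2$ one of its vertices.
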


\begin{proof}
Let $\gs_1$ be the simplex spanned by vertices $v_0, \dots, v_{d_1}$, and $\gs_2$ be the simplex spanned by vertices $w_0, \dots, w_{d_2}$, where $v_i = w_i$ for $i=0, \dots, d_3$. 

If $|\gs_1\cup \gs_2| = d_1+d_2-d_3 +1 \le n+1$, then the vertices of $\gs_1 \cup \gs_2$ are mapped by $f$ to a set spanning an $n$-dimensional simplex in $\rr^n$, so $f_{|\gs_1\cup\gs_2}$ is an embedding. In particular,  $S(f_{|\gs_1\cup\gs_2}) = \emptyset$. 

If $|\gs_1\cup \gs_2| = d_1+d_2-d_3 +1$ is greater or equal to $n+1$, then  $\dim (\spn f(\gs_1\cup \gs_2))=n$, and so
\begin{align*}
    &\dim (S(f_{|\gs_1\cup\gs_2})) = \dim (Cl(f(\gs_1^\circ)\cap f(\gs_2^\circ))) \leq \dim (\spn\{f(\gs_1^\circ)\cap f(\gs_2^\circ)\}) =\\
    &= \dim f(\gs_1) + \dim f(\gs_2) - \dim (\spn f(\gs_1\cup \gs_2))= d_1+ d_2 -n.
\end{align*}
\end{proof}

We have the following consequences of Lemma~\ref{lem: dimension of the singular set}.
\begin{Cor}\label{cor: general position embedding}
    Let $X$ be a finite $d$-dimensional simplicial complex, and let $f:X\to \rr^{2d+1}$ be a general position map. Then $f$ is an embedding. 
\end{Cor}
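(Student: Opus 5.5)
To prove Corollary~\ref{cor: general position embedding}, I will show that $f$ is injective. Since $X$ is compact and $\rr^{2d+1}$ is Hausdorff, injectivity already makes $f$ an embedding. The argument applies Lemma~\ref{lem: dimension of the singular set} to each pair of simplices, so the first step is to reduce injectivity of $f$ to injectivity on unions of two simplices.

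Suppose $f(x)=f(y)$ with $x\neq y$. Choose simplices $\gs_1\ni x$ and $\gs_2\ni y$ of $X$. Then $x$ lies in the singular set $S(f_{|\gs_1\cup\gs_2})$. So it suffices to show that $S(f_{|\gs_1\cup\gs_2})=\emptyset$ for every pair of simplices $\gs_1,\gs_2$. The case $\gs_1=\gs_2$ needs no separate treatment: $f$ is linear on a simplex, and the vertices of that simplex map to affinely independent points, so $f$ is injective there. This case is also covered by the lemma with $d_1=d_2=d_3$.

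Now let $d_1=\dim\gs_1$, $d_2=\dim\gs_2$ and $d_3=\dim(\gs_1\cap\gs_2)\ge -1$, with $n=2d+1$. By Lemma~\ref{lem: dimension of the singular set}, the singular set is empty unless $d_1+d_2-d_3\ge n$. But
\[
d_1+d_2-d_3\le d+d+1=2d+1=n,
\]
so the only case not immediately excluded is equality. Equality forces $d_1=d_2=d$ and $d_3=-1$, that is, two disjoint $d$-simplices. In that case the vertex set of $\gs_1\cup\gs_2$ has $2d+2=n+1$ elements. By the general position hypothesis on $f(X^{(0)})$, these images are affinely independent, so they span an $n$-simplex, and $f$ is injective on $\gs_1\cup\gs_2$. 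This is exactly the first case in the proof of Lemma~\ref{lem: dimension of the singular set}, which gives emptiness whenever the vertex count is at most $n+1$.

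The main obstacle is this boundary case. The lemma as stated uses the non-strict inequality $d_1+d_2-d_3\ge n$, so applied literally it yields only $\dim S\le d_1+d_2-n=-1$ here. I would interpret dimension $-1$ as meaning the set is empty; to be safe, I would instead cite directly the vertex-count argument from the lemma's proof, as above. With every pair handled, $f$ is injective and therefore an embedding.
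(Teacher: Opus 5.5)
Your proof is correct and is essentially the paper's argument: the paper records this corollary, without separate proof, as an immediate consequence of Lemma~\ref{lem: dimension of the singular set} applied to pairs of simplices, using that $d_1+d_2-d_3\le 2d+1$. Your care with the equality case is warranted, since the lemma's stated inequality is non-strict, and it is the vertex-count branch of its proof that gives $S(f_{|\gs_1\cup\gs_2})=\emptyset$ there.
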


\begin{Cor}\label{cor: general position general dimension}
    Suppose $X$ is a finite $d$-dimensional simplicial complex and $f: X \rightarrow \rr^n$ is a general position map. Then $\dim(S(f)) \le 2d-n$.
\end{Cor}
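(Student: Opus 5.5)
The plan is to apply Lemma~\ref{lem: dimension of the singular set} to every pair of simplices $\gs_1,\gs_2$ of $X$ and take the maximum. Since $X$ is finite, $S(f)$ is the union over all such pairs of the sets $S(f_{|\gs_1\cup\gs_2})$. Indeed, if $x\in X$ and $f^{-1}(f(x))$ contains some $y\neq x$, then choose $\gs_1\ni x$ and $\gs_2\ni y$. Then $x\in S(f_{|\gs_1\cup\gs_2})$. Because the union is finite, taking closures commutes with it, and $\dim S(f)$ is the maximum of $\dim S(f_{|\gs_1\cup\gs_2})$ over all pairs.

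Now fix a pair with $d_1=\dim\gs_1\le d$ and $d_2=\dim\gs_2\le d$. There are two cases, according to Lemma~\ref{lem: dimension of the singular set}.
\begin{itemize}
\item If $d_1+d_2-d_3<n$, then $S(f_{|\gs_1\cup\gs_2})=\emptyset$.
\item Otherwise, $\dim S(f_{|\gs_1\cup\gs_2})\le d_1+d_2-n\le 2d-n$.
\end{itemize}
In either case the contribution of the pair has dimension at most $2d-n$, with the empty set counted as having dimension $-1$ or $-\infty$. Taking the maximum over the finitely many pairs gives $\dim S(f)\le 2d-n$.

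The only delicate point is the first reduction: making sure that the closure defining $S(f)$ is taken within the finite union of simplex pairs, and that the notion of dimension used in the paper is compatible with it. This holds because each piece is a finite union of linear pieces inside simplices. There is also a convention to fix. When $2d-n<0$ the statement should be read as $S(f)=\emptyset$. This is consistent with Corollary~\ref{cor: general position embedding}: in that range $d_1+d_2-d_3\le 2d+1\le n$ whenever $d_3\ge -1$. The borderline case $d_1+d_2-d_3=n$ with $2d-n\ge 0$ is handled by the stated bound, so I expect no real obstacle beyond this bookkeeping.
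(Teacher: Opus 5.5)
Your proposal is correct and is essentially the paper's argument: the paper gives no separate proof and treats the corollary as an immediate consequence of Lemma~\ref{lem: dimension of the singular set}, which amounts to writing $S(f)$ as the finite union of the pieces $S(f_{|\gs_1\cup\gs_2})$ and using $d_1+d_2-n\le 2d-n$. Your extra bookkeeping (closure commuting with the finite union, and reading the case $2d-n<0$ as $S(f)=\emptyset$) only makes explicit what the paper leaves implicit.
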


\subsection{Embedding the spine in Euclidean space}
Given two affine subspaces $V, W\subseteq \rr^k$, $V$ and $W$ \emph{intersect transversely} if $V,W$ together span $\rr^k$. In particular, when $\dim V + \dim W = k$, then their intersection is a single point, and $\rr^k$ can be expressed as the direct sum $V\oplus W$.

Similarly, given convex subsets $A, B, C\subseteq \rr^k$, we say that the intersection of $A$ and $B$ is transversal in $C$, if $A\cap B$ is nonempty and $\spn(A)$ and $\spn(B)$ intersect transversely in $\spn(C)$.

\begin{lem}\label{lem: generic choice of v}
    Let $\gs \subseteq \rr^k$ be a $k$-simplex with a choice of $K(\partial \gs)$ inside of $\partial \gs$.
    Let $\mathcal P = \bigcup\{p_1, \dots, p_s\}$ be a finite collection of points in the interior of $\gs$, and $\mathcal L = \bigcup \{\ell_1, \dots, \ell_t\}$ be a finite collection of line segments in $\gs$ not contained in $\partial \gs$. 
    For $v \in \gs^\circ$, let $K_v = \Cone_v(K(\partial \gs))\subseteq \mathbb R^k$. Then the set 
    $$\{v\in \gs^\circ \mid K_v\cap \mathcal P \neq \emptyset \text { or }  K_v \text{ intersects } \mathcal L \text { not transversely in }\gs \} $$ 
    is contained in a union of finitely many codimension one subspaces of $\rr^k$.
\end{lem}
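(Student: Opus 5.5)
The plan is to write $K_v$ as a finite union of cones $\Cone_v(\tau)=\{tv+(1-t)x : x\in\tau,\ t\in[0,1]\}$ over the simplices $\tau$ of $K(\partial\gs)$. Then I treat each bad event, for each pair $(\tau,p_i)$ or $(\tau,\ell_j)$, as an algebraic condition on $v$, and show that the set of $v$ where it occurs lies in a proper affine subspace of $\rr^k$. Since there are finitely many simplices $\tau$, points $p_i$ and segments $\ell_j$, the union of these subspaces gives the claim. The key fact is that $K(\partial\gs)\subseteq\partial\gs$ has dimension at most $k-2$. Its simplices are spanned by barycenters of faces of dimension $\ge1$ of proper faces of $\gs$, and a maximal chain of such faces has length at most $k-1$. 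Hence each $\Cone_v(\tau)$ has dimension at most $k-1$, and the cone point is the only place where $v$ enters.

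\emph{Points.} Fix $p=p_i\in\gs^\circ$ and a simplex $\tau$ of $K(\partial\gs)$ with $\dim\tau=m\le k-2$. If $p\in\Cone_v(\tau)$, then since $p\notin\partial\gs\supseteq\tau$ we have $p\neq x$, and $v$ lies on the ray from a point $x\in\tau$ through $p$. So $v\in\Cone'_p(\tau):=\{p+s(p-x): x\in\tau, s\ge0\}$, which is contained in the affine span of $\tau\cup\{p\}$. That span has dimension at most $m+1\le k-1$, so it sits inside a codimension-one affine subspace.

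\emph{Segments.} Fix $\ell=\ell_j$ and $\tau$ with $\dim\tau=m$. Transversality in $\gs$ means $\spn(\ell)$ and $\spn(\Cone_v(\tau))$ together span $\rr^k$, which forces $m+1+1\ge k$, i.e.\ $m=k-2$. If $m<k-2$, every intersection is non-transverse, so I must show that $\Cone_v(\tau)\cap\ell\neq\emptyset$ happens only for $v$ in a proper subspace. Intersection means $v$ lies in the join-like set $\{y+s(y-x): y\in\ell, x\in\tau\}\subseteq\spn(\tau\cup\ell)$, of dimension at most $m+2\le k-1$. The exception is when the segment $\ell$ touches $\tau$ itself. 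Since $\ell\not\subseteq\partial\gs$ and $\tau\subseteq\partial\gs$, that can only happen at an endpoint of $\ell$ lying on $\partial\gs$.

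\emph{Main obstacle.} That boundary contact is the case I expect to be delicate. If an endpoint of $\ell$ lies on $\tau\subseteq K(\partial\gs)$, then $\ell\cap K_v\neq\emptyset$ for every $v$. I would handle it either by interpreting the transversality requirement only away from $\partial\gs$, i.e.\ for intersections at points of $\ell\cap\gs^\circ$, or by noting that such contact is independent of $v$ and so is not part of the set being bounded. I would first try the former reading and restrict everything to $\ell\cap\gs^\circ$.

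For $m=k-2$, nonempty intersection at an interior point is allowed. The failure of transversality is the condition that the direction of $\ell$ lies in the $(k-1)$-dimensional linear span $\spn(\tau-\tau)+\rr(v-x_0)$, where $x_0\in\tau$. This happens exactly when $v\in\spn(\tau)+\rr u$, with $u$ the direction of $\ell$, which is a hyperplane. In the degenerate case $u\in\spn(\tau-\tau)$, intersection at an interior point forces $v\in\spn(\tau\cup\ell)=\spn(\tau)+\rr(y-x_0)$, again of dimension at most $k-1$.

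Collecting the finitely many subspaces over all pairs finishes the proof.
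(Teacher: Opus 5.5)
Your proof is correct and follows essentially the paper's route. The paper also decomposes $K_v$ into cones $\Cone_v(\tau)$ over the simplices of $K(\partial\sigma)$ and uses $\dim\tau\le k-2$. It places the bad $v$ in $\spn(\{p\}\cup\tau)$ for points, and in the hyperplane $\spn(\tau\cup\ell)$ for segments, which coincides with your $\spn(\tau)+\mathbb{R}u$ whenever a bad $v$ exists.

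Your case split by $\dim\tau$ and your treatment of an endpoint of $\ell$ lying on $K(\partial\sigma)$ are more careful than the paper, which tacitly reads transversality only at points of $\sigma^\circ$, exactly as you propose. Do not use your fallback remark that such $v$-independent contact is ``not part of the set'': that contact would put every $v$ in the set, so the interior reading is the one that makes the lemma true.
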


\begin{proof}
The set $$\{v\in \rr^k \mid K_v\cap \mathcal P \neq \emptyset \text { or }  K_v \text{ intersects } \mathcal L \text { non-transversely in } \gs\} $$ 
is a union of finitely many sets of the form
$$X_{p,\gt} :=\{v\in \rr^k \mid p\in \Cone_v(\gt)\} $$ 
for $p\in \mathcal P$, and a face $\gt$ of $K(\partial \gs)$
and finitely many sets of the form
$$Y_{\ell, \gt} := \{v\in \rr^k \mid \ell \text{ intersects } \Cone_v(\gt) \text{ non-transversely in } \gs\} $$ 
for $\ell\in \mathcal L$, and a face $\gt$ of $K(\partial \gs)$.
We show that each of those sets is contained in a codimension one subspace of $\rr^k$.

Fix $p\in \mathcal P$ and a face $\gt$ of $K(\partial \gs)$. The set of $v\in \rr^k$ such that $p\in \Cone_v(\gt) $ is contained in the set 
$\{v\in \rr^k \mid p\in \spn(\{v\}\cup h(\gt))\}$. But the condition that $p\in \spn(\{v\}\cup h(\gt))$ is equivalent to the condition that $v\in \spn(\{p\}\cup \gt)$. Since by assumption $p\notin \spn{\gt}$ and $\dim\gt \leq k-2$, we have $\dim (\spn(\{p\}\cup \gt)\}) = \dim \gt + 1 \leq k-1$. 

Similarly, if $\Cone_v(\gt)$ intersects a line $l$ non-transversely, then $v$ is in the codimension-one hyperplane spanned by $\gt$ and $l$. 
\end{proof}

\begin{remark}
    In our application of Lemma~\ref{lem: generic choice of v} in the proof of Proposition~\ref{prop: existence of h} $\mathcal P\subseteq \mathcal L$.
\end{remark}

\begin{prop}\label{prop: existence of h}
Let $X$ be a finite $d$-dimensional complex and $f: X \rightarrow \mathbb{R}^{2d-1}$ be a general position map. Then there exists a choice of spine $K(X) \subset X$ such that $f$ restricted to $K(X)$ is an embedding. 

Moreover, there exists $\epsilon>0$ such that the embedding extends to the regular $\epsilon$-neighborhood $N_{\epsilon}$ of $K(X)$ in $X$.
\end{prop}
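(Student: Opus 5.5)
We choose the barycenters $v_\gs$ of the simplices of $X$ of positive dimension inductively on $\dim \gs$, so that $f|_{K(X)}$ is injective. Since $f$ is linear on simplices and injective on each simplex (a general position map sends each simplex of dimension $\le d \le 2d-1$ to a nondegenerate simplex), we may identify each $\gs$ with $f(\gs)\subseteq \rr^{2d-1}$. The only failures of injectivity come from the singular set $S(f)$. By Corollary~\ref{cor: general position general dimension}, $S(f)$ has dimension at most $2d-(2d-1)=1$: inside each simplex it is a finite union of points and line segments. More precisely, by Lemma~\ref{lem: dimension of the singular set}, two simplices of dimensions $d_1,d_2$ meet singularly in dimension $\le d_1+d_2-(2d-1)$. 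So only pairs of $d$-simplices produce segments, and a $d$-simplex paired with a $(d-1)$-simplex produces isolated points.

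\textbf{Inductive choice of barycenters.} Suppose the barycenters of all simplices of dimension $<k$ have been chosen, so that $K(\partial\gs)$ is determined for each $k$-simplex $\gs$, and $f$ is injective on the union of the $K(\tau)$ with $\dim\tau<k$. For a $k$-simplex $\gs$, the spine is $K(\gs)=K_v=\Cone_v(K(\partial\gs))$ with $v=v_\gs$. Let $\mathcal L$ be the finitely many singular segments $S(f)\cap\gs$ not lying in $\partial\gs$, and let $\mathcal P$ be the singular points of $\gs^\circ$ together with the endpoints of the segments, so that $\mathcal P\subseteq\mathcal L$ as in the Remark. By Lemma~\ref{lem: generic choice of v}, applied in $\spn\gs\cong\rr^k$, a generic $v\in\gs^\circ$ avoids $\mathcal P$ and meets $\mathcal L$ only transversely.

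\textbf{Why this gives injectivity.} The spine $K(\gs)$ has dimension $k-1$. If $k<d$, then $K(\gs)$ is transverse to a segment in the $k$-dimensional space $\spn\gs$ only if $(k-1)+1\ge k$, which holds. So we must use the pairing: a point $x\in K(\gs)$ is identified under $f$ with some $y\ne x$ only if $x$ lies in $S(f)$. If the singular set inside $\gs$ is a set of points, avoiding $\mathcal P$ removes all bad points. If it is a segment (so $k=d$), the segment $\ell\subseteq\gs$ is matched with a segment $\ell'\subseteq\gs'$, and we need $K(\gs)\cap\ell$ and $K(\gs')\cap\ell'$ to map to different points. Each intersection is a finite set of transverse points. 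We choose $v_{\gs'}$ after $v_\gs$ and additionally require $K(\gs')$ to avoid the finitely many points of $\ell'$ that are images of $K(\gs)\cap\ell$. This is again a condition of the form handled by Lemma~\ref{lem: generic choice of v}, adding these points to $\mathcal P$, so genericity still suffices. We must also ensure that faces of $K(\partial\gs)$ lying in lower simplices were already handled. This holds because the singular set of a lower simplex $\tau$ with a $d$-simplex consists of points in $\tau$, which were avoided when $v_\tau$ was chosen.

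\textbf{Extension to the neighborhood $N_\epsilon$.} The image $f(K(X))$ is compact and $f|_{K(X)}$ is injective. Hence the pairs $(x,y)$ with $f(x)=f(y)$, $x\ne y$, lie at positive distance from $K(X)\times X\cup X\times K(X)$, except possibly near points where $K(X)$ meets $S(f)$. Away from such points, a compactness argument yields an $\epsilon$ such that $N_\epsilon$ is disjoint from the closure of the non-injectivity locus. At the finitely many transverse points $x\in K(\gs)\cap\ell$, however, $S(f)$ genuinely meets $K(X)$. There we must check that $N_\epsilon$ near $x$ contains a short piece of $\ell$ whose partner $\ell'$ lies in $\gs'$ far from $K(\gs')$ and from $N_\epsilon$. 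This holds because the partner point of $x$ is not in $K(X)$ by the choice above, and we shrink $\epsilon$ so that the small neighborhoods do not collide.

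\textbf{Main obstacle.} I expect this last step to be the hardest. The regular $\epsilon$-neighborhood is only a thin neighborhood of $K$ in the simplicial sense, so we must confirm that $f|_{N_\epsilon}$ is still injective where $N_\epsilon$ crosses the singular segments. I would argue via the transversality secured by Lemma~\ref{lem: generic choice of v}: $N_\epsilon\cap\ell$ is a small interval around each crossing point, and these intervals, together with their partner intervals in $\ell'$, shrink to disjoint points as $\epsilon\to0$.
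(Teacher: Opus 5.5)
You construct the spine essentially as the paper does. You order the $d$-simplices, use Lemma~\ref{lem: generic choice of v} to get transversality to the singular segments, and make each later spine miss the partners of earlier crossing points (the paper's sets $\mathcal P_{ij}$). You also require explicitly that $K(\tau)$ miss the isolated singular points of each $(d-1)$-simplex $\tau$. That is, if anything, more careful than the paper's condition on lower-dimensional barycenters, since it covers segment endpoints on facets and non-pure $X$.

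The genuine difference is the extension to $N_\epsilon$. The paper never analyzes $S(f)$ near $K(X)$. Two $d$-simplices sharing a positive-dimensional face are jointly embedded by Lemma~\ref{lem: dimension of the singular set}. Two sharing at most a vertex have disjoint spines, so injectivity on $K(X)$ plus compactness gives $d(f(K(\sigma_i)),f(K(\sigma_j)))>\delta$, and uniform continuity finishes.

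You instead localize at the finitely many points of $K(X)\cap S(f)$ and use that their partners lie off $K(X)$. This works, but the step you flag relies on a fact you leave implicit: $f$ is injective near each crossing point $x$, because $x\in\sigma^\circ$. Only that guarantees singular points near $x$ have partners near the partners of $x$, rather than near $x$ itself.

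Making local injectivity explicit along all of $K(X)$ gives the shortest proof. It holds because no vertex lies on $K(X)$, and simplices sharing a positive-dimensional face are jointly embedded. Then suppose $x_n\neq y_n\in N_{\epsilon_n}$ with $f(x_n)=f(y_n)$ and $\epsilon_n\to 0$. A subsequence converges to $x,y\in K(X)$ with $f(x)=f(y)$, so $x=y$, contradicting local injectivity at $x$. Like the paper's pairwise argument, this needs only injectivity on $K(X)$, not the transversality bookkeeping at the crossings.
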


\begin{proof} 

For $i=1, \dots, k$, let $\gs_1, \dots, \gs_k$ be the $d$-simplices of $X$. 
In this proof, there are various objects that we denote by a pair of subscripts $ij$ where $i,j\in \{1, \dots, k\}$. We follow a convention that any such object whose first subscript is $i$ is a subset of a simplex $\gs_i$.\\

We first show how to choose $K(X)$ such that $f$ restricted to $K(X)$ is an embedding. 
The spine $K(X)$ is the union 
$$K(X) = K(X^{(d-1)})\cup \bigcup_{i=1}^k K(\gs_i),$$ 
and each $K(\gs_i)$ is naturally isomorphic to $\Cone_{v_{\gs_i}}(K(\partial \gs_i))$.
By Corollary~\ref{cor: general position embedding} the restriction $f_{|X^{(d-1)}}$ to the $(d-1)$-skeleton of $X$ is an embedding. 
By Corollary~\ref{cor: general position general dimension} $\dim S(f)\leq 1$. In particular, for any $\gs_i, \gs_j$ with $i\neq j$ such that $f(\gs_i^\circ)\cap f(\gs_j^\circ)\neq \emptyset$, the intersection $f(\gs_i^\circ)\cap f(\gs_j^\circ)$ consists of either a single vertex or a line segment  intersecting the faces of $\gs_1$ and $\gs_2$ transversely. Let $\ell_{ij}$ be its preimage in $\gs_i$ under $f_{|\gs_i}$. See Figure~\ref{fig:intersecting sigmas}. If $f(\gs_i^\circ)\cap f(\gs_j^\circ)= \emptyset$, let $\ell_{ij} = \emptyset$. 
Let $\mathcal L_i = \bigcup_{j\neq i} \ell_{ij}\subseteq \gs_i$, and $\mathcal L = \bigcup \mathcal L_i$. Note that $\mathcal L$ is a finite union.

\begin{figure}
    \centering
    \includegraphics[width=\linewidth]{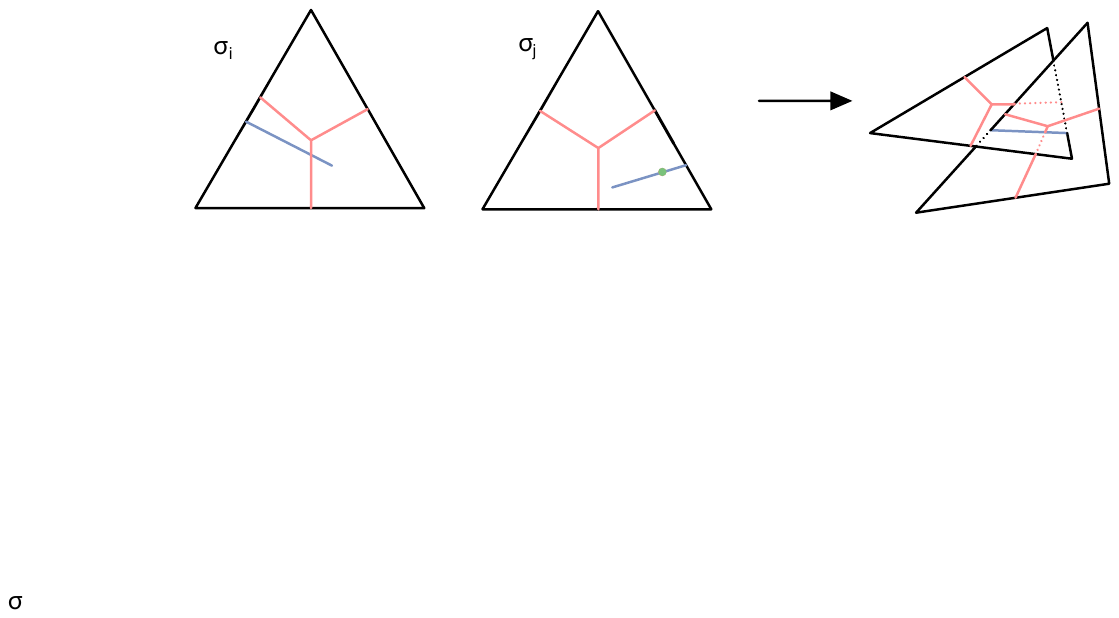}
    \caption{Two simplices $\sigma_i, \sigma_j$ map to $\rr^{2d-1}$ with an intersection along a line segment.
    The blue line segments are $\ell_{ij}\subseteq \sigma_i$ and  $\ell_{ji}\subseteq \sigma_j$. The distinguished green point in $\ell_{ji}$ forms $\mathcal P_{ji}$. The spine $K(\gs_j)$ is always chosen so that $K(\sigma_j)\cap \mathcal P_{ji}=\emptyset$. 
    }
    \label{fig:intersecting sigmas}
\end{figure}

We now choose a  barycenter $v_\gs$ for each simplex $\gs$ of $X$ with $\dim \gs > 0$.
If $\dim \gs<d$, we can choose $v_{\gs}$ to be an arbitrary point in the interior of $\gs$ such that $f(v_{\gs})$ is disjoint from $\mathcal L$. 
We pick $v_{\gs_i}$ for $i=1 , \dots, k$ inductively, so that $K(\gs_i)$ intersects the line segments $\ell_{ij}$ transversely. 
Suppose we have already made choices of $v_{\gs_1}, \dots, v_{\gs_{i-1}}$. 
For $1\leq j<i\leq k$, let $\mathcal Q_{ij}\subseteq \ell_{ij}\subseteq \gs_i$ be the finite set of points that belong to the intersection $\ell_{ij}\cap K(\gs_i)$. Indeed, $Q_{ij}$ is a finite set of points, since $K(\gs_i)$ is a codimension one subspace of $\gs_i$, and each $\ell_{ij}$ is a line segment intersecting $K(\gs_i)$ transversely.
Let $\mathcal P_{ij}\subseteq \ell_{ij}\subseteq \gs_i$ be the finite set of points such that $f(\mathcal P_{ij}) = f(\mathcal Q_{ji})$, i.e.\ for each point $p\in \mathcal P_{ij}$ its image $f(p)$ belongs to the intersection $f(\gs_i)\cap f(K(\gs_j))$. In other words, $\mathcal P_{ij} = f_{|\gs_i}^{-1}(f(\mathcal Q_{ji}))$.  See Figure~\ref{fig:intersecting sigmas}.

Let $\mathcal P_i= \bigcup_{j<i} \mathcal P_{ij}$. 
By Lemma~\ref{lem: generic choice of v} there exists a choice of $v_{\gs_i}$ such that $K(\gs_i)\cap \mathcal P_i = \emptyset$ and 
$K(\gs_i)$ intersects each line of $\mathcal L_i$ transversely.
Thus we have chosen $v_{\gs_i}$ for $i=1, \dots, k$, such that $f$ restricted to $K(X)$ is an embedding.\\

It remains to show that $f$ restricted to the regular $\epsilon$-neighborhood of $K(X)$ is still an embedding, for a sufficiently small $\epsilon>0$. 

Choose $\delta > 0$ so that $\delta< d_{\mathbb{R}^n}(f(K(\gs_i)), f(K(\gs_j)))$ for any $d$-simplices $\gs_i,\gs_j$ which intersect in at most one vertex. 
Since $f$ is uniformly continuous, there is $\epsilon > 0$ so that for any $x,y\in X$ $d_X(x,y) < \epsilon$ implies $d_{\mathbb{R}^n}(f(x), f(y)) < \delta/2$.

Let $x_1 \ne x_2 \in N_\epsilon$ such that for $i=1,2$, $x_i\in \gs_i$ for some $d$-simplex $\gs_i$.
If $\gs_1, \gs_2$ have a common face of dimension $>0$, then by Lemma~\ref{lem: dimension of the singular set}, $f$ is an embedding on $\gs_1\cup \gs_2$ as $d+d-d_3\geq 2d-1$. In particular, $f(x_1)\neq f(x_2)$.

Now suppose $\gs_1, \gs_2$ have at most one common vertex. Since the distance from $f(x_i)$ to $f(K(\gs_i))$ is at most $\delta/2$ we have
\begin{align*}
    &d_{\mathbb{R}^n}(f(x_1), f(x_2)) \geq \\
    &d_{\mathbb{R}^n}(f(K(\gs_i)), f(K(\gs_j))) -  d_{\mathbb{R}^n}(f(x_1), f(K(\gs_1))) - d_{\mathbb{R}^n}(f(x_2), f(K(\gs_2))) >\\
    &  \delta - \delta/2 - \delta/2 = 0.
\end{align*}
In particular, $f(x_1)\neq f(x_2)$. 
\end{proof}

\subsection{Embedding a simplicial complex in a pseudomanifold with boundary}

\main*

\begin{proof}

\item
(1) \emph{ Construction of $P(X)$:}
Let $X$ be a $d$-dimensional simplicial complex. Let $f: X \rightarrow \rr^{2d-1}$ be any general position map.
By Proposition \ref{prop: existence of h}, $f$ restricts to an embedding of $N_{\epsilon}$ of $K(X)$ in $X$ for some $\epsilon>0$ and appropriate choice of $K(X)$ inside of $X$.  
Thus we can view $N_{\epsilon}$ as a simplicial complex in $\rr^{2d-1}$ (to simplify the notation we do not write $f(N_{\epsilon})$). This step is illustarated by the first picture of Figure~\ref{fig:steps}. By Lemma~\ref{lem: zeeman} some subdivision $\hat N$ of $N_{\epsilon}$ is a full subcomplex of some triangulation $R$ of $\rr^{2d-1}$ (i.e.\ $|R|= \rr^{2d-1}$). We denote the induced subdivision of $K(X)$ by $\hat K$. In particular, $\hat K$ is a subcomplex of $\hat N$.
\begin{figure}
    \centering
    \includegraphics[width=\linewidth]{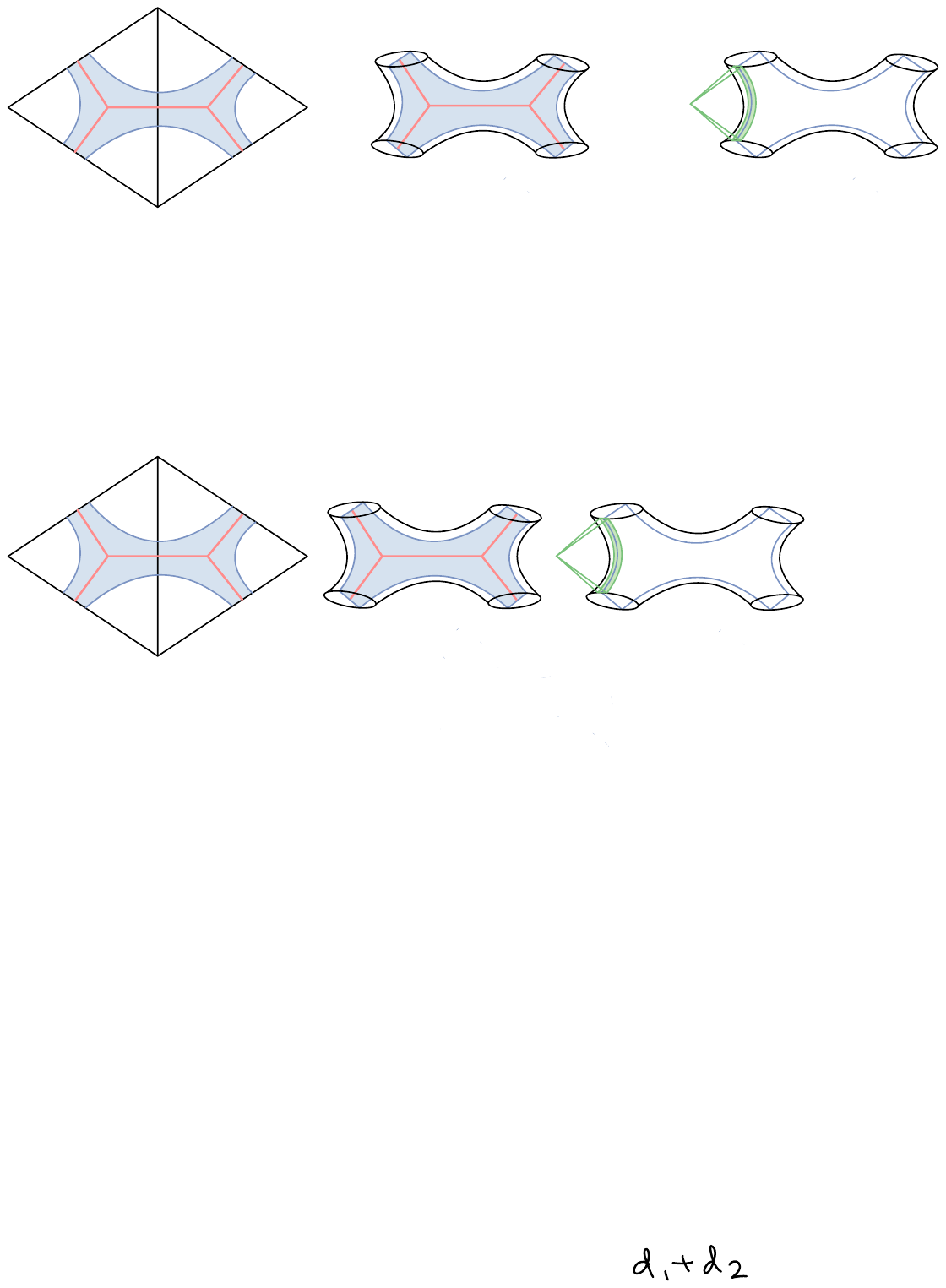}
    \caption{Construction of $P(X)$: (1) choice of a regular neighborhood of the spine in the complex $X$, (2) a regular neighborhood $M$ of the spine in $\rr^n$, (3) coning-off of submanifolds of the boundary of $M$.}
    \label{fig:steps}
\end{figure}

Let $\epsilon'>0$ be sufficiently small so that the regular $\epsilon'$-neighborhood $M$ of $\hat K$ in $R$ satisfies $|M\cap \hat N|\subseteq |\hat N^\circ|$. By Theorem~\ref{thm: boundary of a regular neighborhood} $M$ is a combinatorial $(2d-1)$-manifold with boundary, and $|\dot M| = \partial |M|$. See the second picture of Figure~\ref{fig:steps}.

Since $\hat N$ is a subcomplex of $R$, the regular $\epsilon'$-neighborhood $L$ of $\hat K$ in $\hat N$ is naturally a subcomplex of $M$, and $\dot L$ is an embedded subcomplex of $\partial M$.
By Lemma~\ref{l:neighborhoodsubdivision} and Lemma~\ref{lem: boundary of a regular neighborhood of the spine}, $\dot L$ is a disjoint union $\bigsqcup_{v\in X^{(0)}} L_v$ where each $L_v$ is PL-homeomorphic to $\Lk_X(v)$. 
A regular neighborhood $N_v$ of $L_v$ in $\partial M$ is a codimension zero combinatorial submanifold of $\partial M$. See the last picture of Figure~\ref{fig:steps}.

We define $P(X)$ as $M\cup \bigsqcup_{v\in X^{(0)}} \Cone(w_v, N_v)$, i.e.\ $M$ with each $N_v$ coned off. Clearly $P(X)$ is a simplicial complex (in $\rr^n$ for some $n$ depending on the number on cone points). 
We refer to the new vertices $w_v$ as \emph{cone vertices} of $P(X)$. \\

\noindent (2) \emph{$P(X)$ is a $(2d-1)$-pseudomanifold with isolated singularities:}

Let us analyze the links of vertices in $P(X)$. 
If $u=w_v$ is a cone vertex, then $\Lk_{P(X)}(v)$ is the regular neighborhood $N_v$, which is a combinatorial $(2d-2)$-manifold with boundary. 
If $u$ is not in the link of a cone vertex, then $u\in M$ and $\Lk_{P(X)}(u)$ is equal to $\Lk_M(u)$. 
Finally, if $u$ is in the link of a cone vertex $w_v$, then $u \in \partial M$ and $\Lk_{P(X)}(u)$ is obtained from $\Lk_M(u)$ by coning off $\Lk_{N_v}(u)$. The latter is either PL-homeomorphic to the boundary of a $(2d-2)$-simplex (if $u \in N_v - \partial N_v)$, or PL-homeomorphic to a $(2d-3)$-simplex (if $u \in \partial N_v$). In the first case, $\Lk_{P(X)}(u)$ is a combinatorial $(2d-2)$-sphere, and in the second, it is a $(2d-2)$-disc. This proves that $P(X)$ is a $(2d-1)$-pseudomanifold with isolated singularities.\\

\noindent (3) \emph{$P(X)$ is orientable:}
Since $M$ is a codimension-zero manifold with boundary in $\rr^{2d-1}$, each of its simplices can be oriented consistently with some fixed orientation of $\rr^{2d-1}$. Then for any $(2d-2)$-dimensional simplex $\tau$ of $M$ which is not in $\partial M$, the two orientations on $\tau$ induced by the orientations of the two $(2d-1)$-simplices containing $\tau$ as a face must be reversed. Each $N_v$ is a $(2d-2)$-dimensional manifold with boundary, and the orientation of simplices of $M$ induces an orientation on each $(2d-2)$-simplex $\tau$ of $N_v$. Let the orientation of $\tau$ be given by ordering $(v_0, \dots, v_{2d-2})$. We orient each simplex $v*\tau$ of $\Cone_{w_v}(N_v)$ as $-(w_v, v_0, \dots, v_{2d-2})$. Then the orientation induced on $\tau$ by the orientation of $v*\tau$ is $-(v_0, \dots, v_{2d-2})$, as required. It remains to check that for any $(2d-2)$-simplex containing $w_v$ and which is contained in two $(2d-1)$-simplices, the two orientations induced by the $(2d-1)$-simplices are opposite.
Let $\delta$ be a $(2d-3)$-simplex in $N_v-\partial N_v$ spanned by vertices $v_1, \dots, v_{2d-2}$. Since $N_v$ is an orientable combinatorial manifold with boundary, $\delta$ belongs to two $(2d-2)$-simplices $\tau, \tau'$, and their orientations can be represented by the orderings of their vertices $(v_0, \dots, v_{2d-2})$ and $-(v_0', \dots, v_{2d-2})$ as they must induce the opposite orientations on $\delta$. Thus $w_v*\tau$ and $w_v*\tau'$ also induce opposite orientations on $w_v*\delta$.\\

\noindent (4) \emph{$X$ is a strong deformation retract of $P(X)$:}
Any regular neighborhood $M$ admits a strong deformation retraction onto $K(X)$, which by Lemma~\ref{lem: retracting cones} can be extended to the strong deformation retraction of $P(X)$ onto $X$.
\end{proof}

\begin{remark}
    The assumption that $d\geq 2$ is clearly necessary, as $1$-dimensional pseudomanifolds are $1$-dimensional manifolds. If we try to follow the steps of our construction of $P(X)$ in the case of $d=1$, we get a finite discrete set of intervals as $M$, and disconnected subsets $N_v$. In that case, coning off sets $N_v$ does not yield a pseudomanifold (but in fact the original complex $X$). 
\end{remark}

\section{
Reflection group trick for pseudomanifolds}

We now use a version of Davis's reflection group trick to embed a pseudomanifold with isolated singularities $D$ into a closed pseudomanifold $P$ with isolated singularities. The ``usual" reflection group trick turns an aspherical manifold $N$ with boundary into a closed aspherical manifold $M$ which retracts onto $N$. The general version of the trick we use is described in \cite[Section 11.5]{Davis08}. 
\begin{remark}
    Davis also considers a version of the reflection group trick for pseudomanifolds with boundary in \cite[Section 13.4]{Davis08}. However, Davis additionally requires that every point in $\partial P$ has a neighborhood homeomorphic to $U\times[o,\epsilon)$ where $U$ is some open neighborhood of the point in $\partial P$. This condition fails in general for the cone points in $P(X)$ as constructed in the proof of Theorem~\ref{t:main}. Thus we cannot directly quote this version. 
\end{remark}

\begin{defn}
    A \emph{mirror structure} on a CW-complex $Y$ consists of an index set $S$ and a family of subcomplexes $\{Y_s\}_{s \in S}$.
    The subcomplexes $Y_s$ are the \emph{mirrors} of $Y$.
    We say $Y$ is a mirrored space over $S$.
\end{defn}

Let $Y$ be a mirrored space over $S$, and let $W$ be a right-angled Coxeter group with vertex set $S$.
Given $y\in Y$, let $S(y) := \{s \in S\mid y \in Y_s\}$, and $W_{S(y)}$ be the Coxeter subgroup generated by $S(y)$.
Given any Coxeter group generated by $S$, let $\cu(W, Y )$ be the \emph{basic construction} associated to this data:
\[
\cu(W, Y) := (W \times Y) /\sim,
\]
where $(w,y) \sim (w',y')$ if and only if $y = y'$ and $wW_{S(y)} = w'W_{S(y)}$.

Let $Y$ be a simplicial complex, and $L$ a flag subcomplex.
This produces a mirror structure on the barycentric subdivision $Y'$, where $S = L^{(0)}$. For $s\in S$, the mirror $Y_s$ is the star of the vertex $s$ in the barycentric subdivision $L'$ of $L$, i.e. $Y_s$ is the geometric realization of the poset of simplices of $L$ containing $s$. 

Let $W_L$ be the right-angled Coxeter group with respect to $L$. 
We can then form the basic construction $\mathcal{U}(W_L,Y')$. Topologically, there is no difference between $\mathcal{U}(W_L,Y')$ and $\mathcal{U}(W_L,Y)$, but later we will want a succinct description of the links of vertices, and $Y'$ helps. 
If $\Gamma$ is a torsion-free finite index subgroup of $W_L$, then the output of the reflection group trick is the space $\mathcal{U}(W_L,Y')/\Gamma$. 

The universal cover of $\mathcal{U}(W_L,Y')/\Gamma$ is itself a basic construction $\mathcal{U}(W_{\widetilde L},\widetilde Y')$, where $\widetilde Y'$ is the universal cover of $Y'$ and $W_{\widetilde L}$ is the right-angled Coxeter group based on the lifted flag triangulation $\widetilde L$ \cite[pg. 168]{Davis08}.
In particular, if $Y$ is aspherical then $\mathcal{U}(W_{\widetilde L},\widetilde Y')$ is contractible, so $\mathcal{U}( W_L, Y')/\Gamma$ is aspherical. The orbit map $\mathcal{U}( W_L, Y') \rightarrow Y'$ induces a retraction from $\mathcal{U}( W_L, Y')/\Gamma \rightarrow Y'$.

\begin{prop}\label{prop:reflection group trick}
Suppose that $P$ is a pseudomanifold with isolated singularities obtained from a compact combinatorial $d$-manifold $M$ with boundary by attaching cones  to some number of disjoint codimension zero PL-submanifolds with boundary $(N_i, \partial N_i)$ of $\partial M$. Suppose that the induced triangulation $L$ on $\partial P$ is flag. 

Then for any torsion-free finite index subgroup $\Gamma\subseteq W_L$, the space $Q = \mathcal{U}(W_L,P')/\Gamma$ for any is a closed pseudomanifold with isolated singularities. Furthermore, if $P$ is aspherical, then $Q$ is aspherical. 
\end{prop}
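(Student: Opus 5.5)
The plan is to compute links of vertices in $\mathcal{U}(W_L,P')$ and check that each is a combinatorial $(d-1)$-manifold without boundary. Since $\Gamma$ is torsion-free of finite index, it acts freely and properly on $\mathcal{U}(W_L,P')$: point stabilizers are conjugates of the finite groups $W_{S(y)}$, which meet $\Gamma$ trivially. For a suitably fine structure (pass to a further barycentric subdivision if needed) the quotient map is then a simplicial covering. So $Q$ is a finite simplicial complex whose vertex links agree with those in $\mathcal{U}(W_L,P')$. Closedness and isolated singularities of $Q$ will then follow from the analogous local statements upstairs, together with the lemma that a pseudomanifold has isolated singularities iff its vertex links are combinatorial manifolds. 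Purity and the condition on codimension-one faces follow from the link computation as well.

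\textbf{Link computation.} Let $y$ be a vertex of $P'$ and $T=S(y)$; this is the vertex set of a simplex $\sigma_y$ of $L$, since $L$ is flag and mirrors are stars in $L'$. The image of $(1,y)$ in $\mathcal{U}$ has a neighborhood that is a union of $|W_T|=2^{|T|}$ copies of a neighborhood of $y$ in $P'$, glued along the mirrors. Hence its link is $\mathcal{U}(W_T,\Lk_{P'}(y))$, with the induced mirror structure, in which the mirror for $s$ is $\Lk_{P'}(y)\cap P'_s$. I will split into three cases.

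\emph{Case 1: $y$ is not in $\partial P$.} Then $T=\emptyset$, and the link is $\Lk_{P'}(y)$, which is a combinatorial manifold by hypothesis. It is closed: either $y$ is an interior point of $M$, or $y$ is a cone vertex $w_i$. In the second case the link is $N_i$, and the key observation is that $\partial P$ is $\partial M - \bigcup N_i^\circ$ together with $\bigcup \Cone(\partial N_i)$. So $w_i\in\partial P$, and $T\neq\emptyset$ for the cone points. I therefore treat them separately in Case 3.

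\emph{Case 2: $y\in\partial P$ is a manifold point.} Then $\Lk_{P'}(y)$ is a $(d-1)$-disc whose boundary is $\Lk_{L'}(y)$, with the mirror structure pulled back from $L$. This is the standard manifold situation: near $y$, the space $P$ looks like $\Cone(\Lk_{L'}(y))$ times a half-line, and reflecting in the $|T|$ mirrors, which meet like coordinate hyperplanes (right-angled), produces a sphere. This is Davis's argument for manifolds with boundary \cite[Section 11.5]{Davis08}.

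\emph{Case 3: $y=w_i$ is a cone point.} Here $\Lk_{P'}(w_i)=N_i'$, and its boundary $\partial N_i'=\Lk_{L'}(w_i)$ carries the mirror structure from $L$. So the link upstairs is $\mathcal{U}(W_T,N_i')$ with mirrors lying in $\partial N_i$. The point is that this is exactly the reflection group trick applied to the compact manifold with boundary $N_i$, whose boundary is a subcomplex of $L$ (note $\Lk_L(w_i)$ is flag since $L$ is). By Case 2 applied inductively in dimension, $\mathcal{U}(W_T,N_i')$ is a closed $(d-1)$-manifold. It is a combinatorial manifold because its vertex links are again basic constructions of discs, hence spheres.

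\textbf{Main obstacle.} Cases 2 and 3 are where the work lies. One must check that the mirror structure on the link agrees with the one induced from $\partial$ of the link, and that the local reflection of a disc about right-angled mirrors yields a PL sphere and not merely a homology sphere. I would argue this by induction on dimension, using that $\Lk_{L'}(y)$ is the barycentric subdivision of a simplex star. The case of a point of $\partial P$ lying on $\partial N_i$ also needs care, since there $P$ is locally a cone joined to a half-disc; I expect the same disc argument to apply because $\Lk_P(y)$ is still a disc.

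\textbf{Asphericity.} This follows from the discussion preceding the proposition: the universal cover of $Q$ is $\mathcal{U}(W_{\widetilde L},\widetilde P')$, which is contractible when $P$ is aspherical.
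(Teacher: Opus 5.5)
Your framework matches the paper's. You compute the link of a vertex $y$ of the chamber $P'$ as $\mathcal{U}(W_{S(y)},\Lk_{P'}(y))$, pass to the free quotient by $\Gamma$, and get asphericity from $\mathcal{U}(W_{\widetilde L},\widetilde P')$.

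\textbf{Case 3 fails as written.} As a vertex of $L'$, the cone point $w_i$ is the barycenter of the $0$-simplex $\{w_i\}$ of $L$, so it lies in the mirror $\St_{L'}(s)$ only for $s=w_i$. Hence $T=S(w_i)=\{w_i\}$ and $W_T=\zz/2$. Your own general description then gives the link of $w_i$ as $\mathcal{U}(\zz/2,\Lk_{P'}(w_i))$ with the single mirror $\Lk_{P'}(w_i)\cap\St_{L'}(w_i)=\Lk_{L'}(w_i)\cong(\partial N_i)'$. That is the double $N_i\cup_{\partial N_i}N_i$, a closed combinatorial $(d-1)$-manifold, and this is exactly the paper's argument.

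What you wrote instead is ``the reflection group trick applied to $N_i$,'' with mirrors coming from the vertices of the flag complex $\Lk_L(w_i)$. That is a basic construction over $W_{\Lk_L(w_i)}$. The mirrors of those other vertices do not contain $w_i$ and play no role in its link. Moreover, $\Lk_L(w_i)=\partial N_i$ is not a simplex, so $W_{\Lk_L(w_i)}$ is infinite. That basic construction is therefore non-compact, so it is neither a closed manifold nor the link of a vertex.

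\textbf{Case 2 is only sketched.} You appeal to Davis's argument plus an unspecified induction, and you only ``expect'' it to work at vertices on $\partial N_i$. The paper handles every vertex $v_\sigma$ of $L'$ (with $\sigma$ not a cone vertex $\{w_i\}$) uniformly and without induction, as follows.
\begin{itemize}
\item The link splits as $\Lk_{P'}(v_\sigma)=(\partial\sigma)'*\Lk_P(\sigma)'$, with $\Lk_P(\sigma)'\cong\Delta$ a disc.
\item The $s$-mirror is $\St_{(\partial\sigma)'}(s)*\partial\Delta$.
\item The PL identification $(\partial\sigma)'*\Delta\cong\sigma'*\partial\Delta$ exhibits the link as $\mathcal{U}\bigl((\zz/2)^{|\sigma|},\sigma'\bigr)*\partial\Delta$.
\item This is a join of a $(\dim\sigma)$-sphere with a sphere, hence a PL sphere.
\end{itemize}
The only extra input at a vertex $y\in\partial N_i$ is that $\Lk_P(y)=\Lk_M(y)\cup\Cone(\Lk_{N_i}(y))$ is a disc. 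This holds because one cones off a $(d-2)$-disc in the boundary of the $(d-1)$-disc $\Lk_M(y)$.
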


\begin{proof}
The triangulation $L$ induces a mirror structure on $P'$ with mirrors $\St_{L'}(s)$ for $s\in S = L^{(0)}$, as above.
We prove that $\mathcal{U}(W_L,P')$ is a pseudomanifold with isolated singularities, which implies the result as this is preserved by the $\gG$-action. 
Since $W_L$ acts on $\mathcal{U}(W_L,P')$ with strict fundamental domain $P'$, it suffices to consider vertices in the chamber $P'$. 
If $v \notin L = \partial P'$, then $P'$ is the unique chamber containing $v$, and $\Lk_{P'}(v)$ is a combinatorial $(2d-1)$-sphere. Then the link of $v$ in $\mathcal{U}(W_L,P')$ is the same.

Now let $v$ be a cone vertex. Then $\Lk_{ P'}(v)$ is PL-homeomorphic to $N_i$ for some $i$. 
The vertex $v$ is contained in a unique mirror of $P$, namely $\St(v,L')$, which is a cone on $\partial N_i$.  
Thus the link of $v$ in $\mathcal{U}(W_L,X)$ is the double of $N_i$ over $\partial N_i$, which is a closed combinatorial manifold.

Finally suppose $v$ is a non-cone vertex of $L'$, i.e.\ $v=v_{\gs}$ is the barycenter of some simplex $\gs \in L$. 
The link $\Lk_{P'}(v_\gs)$ decomposes as a join $(\partial \gs)' \ast \Lk_P(\gs)'$.
Since $P'$ is a pseudomanifold with isolated singularities, $\Lk_{P}(\gs)'$ is PL-homeomorphc to a simplex $\Delta$ (with the boundary $\partial \Delta$ contained in $L$), which is also PL-homeomorphic to the cone $\Cone(w,\partial \Delta)$. See Figure~\ref{fig:mirros in link}.
Let $S(v_\gs) = \{s \in S | v_\gs \in X_s\}$; note that the elements of $S(v_{\gs})$ correspond to the vertices of $\gs$. 
There is an induced mirror structure on $\Lk_{P'}(v_\gs)$ where the $s$-mirror is the star of the vertex $s$ in $(\partial \gs)' \ast \partial \Lk_{P}(\gs)'$. 
The link of $v_\gs$ in $\cu(W_L, P')$ is the corresponding basic 
construction $\cu(W_{S(v_\gs)}, \Lk_{P'}(v_\gs))$.

The PL-homeomorphism between $\Lk_{P}(\gs)'$ and $\Cone(w,\partial \Delta)$ induces a PL-homeomorphism $\Lk_{P'}(v_\gs) \rightarrow (\partial \gs)' \ast \Delta$ 
which takes the each $s$-mirror to $\St_s(\partial \gs)' \ast \partial \Delta$. We note that $(\partial \gs)' \ast \Delta = \gs' \ast \partial \Delta$, which can be seen by ``pushing'' the vertex $w$ from $\Cone(w, \partial \Delta)$ to the center of $\partial \gs$, see Figure~\ref{fig:mirros in link}.
\begin{figure}
    \centering
    \includegraphics[width=\linewidth]{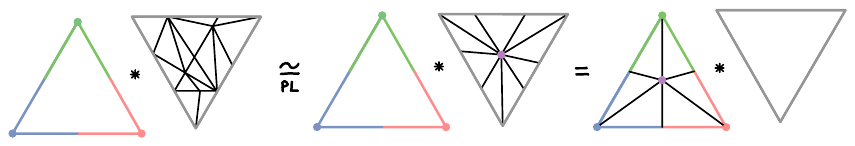}
    \caption{The link $\Lk_{P'}(v_{\sigma})$ with the induced mirror structure. Each vertex $s\in S(v_{\gs})$ is distinguished in the first picture, and the associated mirror is the join of its star in the left cycle with the boundary of the right disc. The PL-homeomorphism takes the disc $\Lk_{P'}(\sigma)$ to a cone over a cycle, whose cone vertex is distinguished in the picture.}
    \label{fig:mirros in link}
\end{figure}

Reflecting $\gs$ using the mirror structure $\St_s(\partial \gs)'$ and the right-angled Coxeter group $W_{S(v_\gs)} \cong (\mathbb{Z}/2)^{|S(v_\gs)|}$ yields a combinatorial sphere of dimension $\dim \gs$.
Thus, the link of $v_\gs$ in $\cu(W_L, P')$ is a join of two combinatorial spheres which is a combinatorial sphere.
\end{proof}

Combining Proposition~\ref{prop:reflection group trick} with Theorem \ref{t:main} gives the following:

\mainclosed*

\bibliographystyle{alpha}
\bibliography{sample}

@incollection{Bjorner,
    AUTHOR = {Bj{\"o}rner, A.},
     TITLE = {Topological methods},
 BOOKTITLE = {Handbook of combinatorics, {V}ol. 1, 2},
     PAGES = {1819--1872},
 PUBLISHER = {Elsevier Sci. B. V., Amsterdam},
      YEAR = {1995},
      ISBN = {0-444-88002-X},
   MRCLASS = {52B05 (05B35 05E25 52B40)},
  MRNUMBER = {1373690},
MRREVIEWER = {Andrew\ Vince},
}

@article{bkk,
    AUTHOR = {Bestvina, Mladen and Kapovich, Michael and Kleiner, Bruce},
     TITLE = {Van {K}ampen's embedding obstruction for discrete groups},
   JOURNAL = {Invent. Math.},
  FJOURNAL = {Inventiones Mathematicae},
    VOLUME = {150},
      YEAR = {2002},
    NUMBER = {2},
     PAGES = {219--235},
      ISSN = {0020-9910,1432-1297},
   MRCLASS = {57S30 (57M07)},
  MRNUMBER = {1933584},
MRREVIEWER = {Vyacheslav\ S.\ Krushkal},
       DOI = {10.1007/s00222-002-0246-7},
       URL = {https://doi.org/10.1007/s00222-002-0246-7},
}

@incollection{Bryant2002,
    AUTHOR = {Bryant, J. L.},
     TITLE = {Piecewise linear topology},
 BOOKTITLE = {Handbook of geometric topology},
     PAGES = {219--259},
 PUBLISHER = {North-Holland, Amsterdam},
      YEAR = {2002},
      ISBN = {0-444-82432-4},
   MRCLASS = {57Qxx (57Q15 57Q30 57Q37 57Q40)},
  MRNUMBER = {1886671},
MRREVIEWER = {R.\ J.\ Daverman},
}

@article{DranishnikovRepovs93,
    AUTHOR = {Drani\v{s}nikov, A. N. and Repov\v{s}, D.},
     TITLE = {Embeddings up to homotopy type in {E}uclidean space},
   JOURNAL = {Bull. Austral. Math. Soc.},
  FJOURNAL = {Bulletin of the Australian Mathematical Society},
    VOLUME = {47},
      YEAR = {1993},
    NUMBER = {1},
     PAGES = {145--148},
      ISSN = {0004-9727},
   MRCLASS = {57Q35 (55P10)},
  MRNUMBER = {1201430},
MRREVIEWER = {K. Horvati\'{c}},
       DOI = {10.1017/S0004972700012338},
       URL = {https://doi.org/10.1017/S0004972700012338},
}

@article{vanKampen33,
	author = {van Kampen, E.~R.},
	title = {Komplexe in euklidischen Raumen},
	journal = {Abh. Math. Sem. Univ. Hamburg},
	volume = {9}, YEAR = {1933},
	number = {1},
	pages = {72--78},

 }

@article{do01,
    author = {Davis, M. and Okun, B.},
    title = {Vanishing theorems and conjectures for the {$\ell^2$}-homology of right-angled {C}oxeter groups},
    year = {2001},
    issn = {1465-3060},
    journal = {Geom. Topol.},
    volume = {5},
    pages = {7--74},

    }

@article{dhw23,
	author = {Dani, P. and Haulmark, M. and Walsh, G.},
	title = {Right-angled {C}oxeter groups with non-planar boundary},
	journal = {Groups, Geometry, and Dynamics},
	volume = {17}, 
    YEAR = {2023},
	number = {1},
	pages = {127--155},
 }

@article{mr25,
author = {Manning, J. and Ruffoni, L.},
title = {Incubulable hyperbolic $3$-pseudomanifold groups},
year = {2026},
}

@book{Stallings65,
  title={The Embedding of Homotopy Types Into Manifolds},
  author={Stallings, J.R.},
  url={https://books.google.com/books?id=RQ4_AAAAIAAJ},
  year={1965},
  publisher={Princeton University Press}
}

@book{Davis08,
       title={The geometry and topology of {C}oxeter groups},
     author={Davis, M.W.},
      series={London Mathematical Society Monographs Series},
   publisher={Princeton University Press},
     address={Princeton, NJ},
        year={2008},
      volume={32},
        ISBN={978-0-691-13138-2; 0-691-13138-4},
      review={\MR{2360474 (2008k:20091)}},
}

@book{Zeeman63,
     
       title={Seminar on Combinatorial Topology},
     author={Zeeman, E.C.},
      series={},

publisher={Institut Des Hautes \'Etudes Scientifiques},
        year={1963},
   
}

@incollection{HogAngeloniMetzler93,
    AUTHOR = {Hog-Angeloni, Cynthia and Metzler, Wolfgang},
     TITLE = {Geometric aspects of two-dimensional complexes},
 BOOKTITLE = {Two-dimensional homotopy and combinatorial group theory},
    SERIES = {London Math. Soc. Lecture Note Ser.},
    VOLUME = {197},
     PAGES = {1--50},
 PUBLISHER = {Cambridge Univ. Press, Cambridge},
      YEAR = {1993},
      ISBN = {0-521-44700-3},
   MRCLASS = {57M20},
  MRNUMBER = {1279175},
       DOI = {10.1017/CBO9780511629358.003},
       URL = {https://doi.org/10.1017/CBO9780511629358.003},
}

@incollection{CollinsMiller99,
    AUTHOR = {Collins, Donald J. and Miller, III, Charles F.},
     TITLE = {The word problem in groups of cohomological dimension 2},
 BOOKTITLE = {Groups {S}t. {A}ndrews 1997 in {B}ath, {I}},
    SERIES = {London Math. Soc. Lecture Note Ser.},
    VOLUME = {260},
     PAGES = {211--218},
 PUBLISHER = {Cambridge Univ. Press, Cambridge},
      YEAR = {1999},
      ISBN = {0-521-65588-9},
   MRCLASS = {20F10 (20F38 57M20)},
  MRNUMBER = {1676618},
MRREVIEWER = {F.\ Levin},
}

@book{Hatcher02,
    AUTHOR = {Hatcher, Allen},
     TITLE = {Algebraic topology},
 PUBLISHER = {Cambridge University Press, Cambridge},
      YEAR = {2002},
     PAGES = {xii+544},
      ISBN = {0-521-79160-X; 0-521-79540-0},
   MRCLASS = {55-01 (55-00)},
  MRNUMBER = {1867354},
MRREVIEWER = {Donald\ W.\ Kahn},
}

\end{document}